\documentclass{amsart}
\usepackage{amsfonts,amssymb,amsmath,amsthm}
\usepackage{url}
\usepackage{enumerate}
\usepackage{amsfonts,epic,amsbsy,amscd,amstext}
\usepackage{amssymb,latexsym,amsmath,amsthm,amscd,mathrsfs}
\usepackage{comment}
\usepackage{color}
\usepackage{mathrsfs}
\usepackage{bbold}

\urlstyle{sf}
\newtheorem{theorem}{Theorem}[section]

\newtheorem{question}[theorem]{Question}
\newtheorem{lemma}[theorem]{Lemma}
\newtheorem{corollary}[theorem]{Corollary}
\theoremstyle{definition}

\newtheorem{definition}[theorem]{Definition}
\numberwithin{equation}{section}


\def \lt {<}
\def \gt {>}

\newcommand{\zfc}{\mathnormal{\mathsf{ZFC}}}

\newcommand{\ch}{\mathnormal{\mathsf{CH}}}

\newcommand{\covm}{\mathnormal{\mathrm{cov}(\mathcal M)}}
\newcommand{\cofm}{\mathnormal{\mathrm{cof}(\mathcal M)}}
\newcommand{\V}{\mathnormal{\mathbf{V}}}

\DeclareMathOperator{\id}{id}
\DeclareMathOperator{\ran}{ran}

\author[D. Fern\'andez]{David~J. Fern\'andez-Bret\'on}
\address{
Department of Mathematics\\
University of Michigan\\
2074 East Hall, 530 Church Street \\
Ann Arbor, MI 48109-1043, U.S.A.}

\email{djfernan@umich.edu}
\urladdr{http://www-personal.umich.mx/\textasciitilde djfernan/}

\author[M. Hru\v{s}\'ak]{Michael Hru\v{s}\'ak}
\address{
Instituto de Matem\'aticas \\
Universidad Nacional Aut\'onoma de M\'exico \\
\'Area de la Investigaci\'on Cient\'{\i}fica, Circuito Exterior, Ciudad Universitaria \\
Coyoac\'an, 04510, M\'exico, D.F., Mexico}
\email{michael@matmor.unam.mx}
\urladdr{http://www.matmor.unam.mx/\textasciitilde michael/}

\keywords{Ultrafilter, Random forcing, dominating number, parametrized diamond principle, 
topology on the rationals.}
\subjclass[2010]{Primary 03E75; Secondary 54A35, 03E35, 03E17, 54F05.}

\thanks{The first author was partially supported by Postdoctoral Fellowship 
number 263820 from the Consejo Nacional de Ciencia y Tecnolog\'{\i}a 
(CONACyT), Mexico; while the research of the second author was partially 
supported by PAPIIT grant IN 108014 and CONACYT grant 177758.}

\begin{document}

\title[Gruff ultrafilters]{Gruff ultrafilters}

\begin{abstract}
We investigate the question of whether $\mathbb Q$ carries an ultrafilter 
generated by perfect sets (such ultrafilters were called
\textit{gruff ultrafilters} by van Douwen). We prove that one can (consistently) 
obtain an affirmative 
answer to this question in three different ways: by assuming a certain 
parametrized diamond principle, from the cardinal invariant equality 
$\mathfrak d=\mathfrak c$, and in the Random real model.
\end{abstract}
\maketitle

\section{Introduction}

In a 1992 paper, Eric van Douwen~\cite{vandouwen} carried out an investigation about 
certain points in the \v Cech-Stone compactification of $\mathbb Q$ (where 
$\mathbb Q$ is equipped with the 
topology inherited from the Euclidean topology on $\mathbb R$, so that points 
in $\beta\mathbb Q$ can be realized as maximal filters of closed sets), with the 
property that they actually generate an ultrafilter on $\mathbb Q$. In other words, 
van Douwen was looking at ultrafilters over $\mathbb Q$ that have a base of closed 
sets, and among those he paid particular attention to the ones where the elements 
of a base can be taken to be crowded (recall that a set is crowded if it has no
isolated points), in addition to being closed. This was the motivation for stating the 
following definition.

\begin{definition}\label{defgruff}
A nonprincipal ultrafilter $u$ on $\mathbb Q$ is said to be \textbf{gruff} (a pun on 
the fact that these are points in $\beta\mathbb Q$ that ``generate  real ultra 
filters") if it has a base of perfect (i.e. closed and crowded) subsets of 
$\mathbb Q$. This is, we require that $(\forall A\in u)(\exists X\in u)(X\text{ is 
perfect and }X\subseteq A)$.
\end{definition}

Recall that a coideal on a set $X$ is a family $\mathscr A$ with the property 
that $\varnothing\notin\mathscr A$, $\mathscr A$ is closed under supersets, and 
whenever an element $A\in\mathscr A$ is written as $A=A_0\cup A_1$, there 
exists an $i\in 2$ such 
that $A_i\in\mathscr A$. If the infinite set $X$ has a topology in which $X$ itself is 
crowded, then  the family 
\begin{equation*}
 \mathscr C=\{A\subseteq X\big|A\text{ contains an infinite crowded set}\}
\end{equation*}
constitutes a coideal. Moreover, in the topological 
space $\mathbb Q$, every infinite crowded set contains an infinite perfect subset. 
This fact, 
which is not true in a general topological space (for example, in every Polish 
space it is possible to construct \textit{Bernstein 
sets}, sets that are not contained in nor disjoint from any uncountable 
perfect subset), implies that the family
\begin{equation*}
 \mathscr P=\{A\subseteq X\big|A\text{ contains an infinite perfect set}\},
\end{equation*}
also constitutes a coideal on $\mathbb Q$. It is for this reason that 
Definition~\ref{defgruff} is justified.

The main question that van Douwen asked about gruff ultrafilters is whether 
their existence can be proved in $\zfc$. He himself~\cite[Thm. 2.1]{vandouwen} 
provided a partial 
answer by proving that the existence of a gruff ultrafilter follows from 
the cardinal invariant equality $\covm=\mathfrak c$, which is equivalent to 
Martin's Axiom restricted to countable forcing notions. Although the question 
remains open, more partial results have been proven. 
Copl\'akov\'a and Hart~\cite[Thm. 1]{coplakova} 
proved in 1999 that the existence of a gruff 
ultrafilter follows from $\mathfrak b=\mathfrak c$. Some time after, in 
2003, Ciesielski and Pawlikowski~\cite[Thm. 4.22]{ciesielski-pawlikowski} showed that 
the existence of a gruff ultrafilter follows from a combinatorial principle 
known as $\mathnormal{\mathsf{CPA}_{\mathrm{prism}}^{\mathrm{game}}}$, 
which, in particular, implies that there exist gruff ultrafilters in the Sacks 
model, as this model satisfies that combinatorial principle. This theorem was 
improved shortly after by Mill\'an~\cite[Thm. 3]{millan}, who showed that, in fact, 
$\mathnormal{\mathsf{CPA}_{\mathrm{prism}}^{\mathrm{game}}}$ implies the 
existence of a gruff ultrafilter that is at the same time a Q-point (it is shown 
in~\cite[Prop. 5.5.5]{cpabook} that a gruff ultrafilter cannot be a P-point).

In this paper, we obtain three more partial answers to  van 
Douwen's question. The first result involves the theory of diamond principles 
that are par\-amet\-rized by a cardinal invariant, as developed in~\cite{weakdiamond}.
We define a cardinal characteristic $\mathfrak r_P$ that relates naturally to 
perfect subsets of $\mathbb Q$, and show that its corresponding parametrized 
diamond principle $\diamondsuit(\mathfrak r_P)$ implies the existence of a gruff 
ultrafilter which is at the same time a Q-point. We also show that 
this parametrized diamond principle holds in the 
Sacks model, thus providing an alternative proof of Mill\'an's theorem on the 
existence 
of gruff Q-points in this model. Our second result is that the existence 
of gruff ultrafilters follows from the cardinal invariant equality 
$\mathfrak d=\mathfrak c$. Since (it is provable in $\zfc$ that) 
$\mathfrak d\geq\mathfrak b$ and $\mathfrak d\geq\covm$, but both inequalities 
can be consistently strict (even simultaneously), this result is stronger than 
both van Douwen's, and Copl\'akov\'a and Hart's. Finally, our third result is that 
in the Random real model there exists a gruff ultrafilter (this, together with the 
$\diamondsuit(\mathfrak r_P)$ result mentioned above,
shows that the existence of gruff ultrafilters is consistent with 
$\mathfrak d\lt\mathfrak c$). First we will prove a lemma 
that will simplify our interaction with gruff 
ultrafilters. Throughout this paper, the notation $(a,b)$ will 
be used to refer to intervals on $\mathbb Q$. In other words, 
$(a,b)=\{q\in\mathbb Q\big|a\lt q\lt b\}$ whenever $a,b\in\mathbb R$.

\begin{lemma}\label{boundedvsunbounded}
There exists a gruff ultrafilter on $\mathbb Q$ if and only if there 
exists an ultrafilter on the set of positive rational numbers $\mathbb Q^+$
with a base of perfect unbounded sets.
\end{lemma}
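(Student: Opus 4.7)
The plan is to prove both directions separately. For the ``if'' direction I will fix a homeomorphism $h\colon\mathbb{Q}^+\to\mathbb{Q}$, which exists by Sierpi\'nski's characterization of countable metric spaces without isolated points. Given an ultrafilter $v$ on $\mathbb{Q}^+$ with a base of perfect unbounded sets, the pushforward $\{h[A]\colon A\in v\}$ is a nonprincipal ultrafilter on $\mathbb{Q}$, and since $h$ preserves the topological notions of ``closed'' and ``crowded,'' the image of any perfect base element remains perfect in $\mathbb{Q}$; this produces the desired gruff ultrafilter. Note that the unboundedness hypothesis is not used in this direction.

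The ``only if'' direction is the substantive one. Let $u$ be a gruff ultrafilter on $\mathbb{Q}$. Since $\{0\}\notin u$ and $\mathbb{Q}=\mathbb{Q}^-\cup\{0\}\cup\mathbb{Q}^+$, one of $\mathbb{Q}^{\pm}$ lies in $u$; by applying the homeomorphism $x\mapsto -x$ if necessary, I may assume $\mathbb{Q}^+\in u$. The argument then splits on whether $u$ is \emph{unbounded} (meaning $(n,\infty)\cap\mathbb{Q}\in u$ for every $n\in\mathbb{N}$) or bounded. In the unbounded case every element of $u$ is unbounded, and for each perfect $A\in u$ the set $A\cap\mathbb{Q}^+$ (also in $u$) is perfect in $\mathbb{Q}^+$ and unbounded; thus $u$ restricted to $\mathbb{Q}^+$ already has the required base. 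In the bounded case, $u$ viewed on a compact interval $[0,N]$ converges to a unique real point $x^*\in[0,\infty)$, and after a left/right symmetry I may assume $(x^*,x^*+\varepsilon)\cap\mathbb{Q}\in u$ for every $\varepsilon>0$.

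The heart of the proof is the construction of a homeomorphism $\phi$ from an element of $u$ onto $\mathbb{Q}^+$ that converts accumulation at $x^*$ into unboundedness. I pick strictly decreasing $a_n\downarrow x^*$ and strictly increasing $b_n\uparrow\infty$ with $a_n,b_n\in\mathbb{R}\setminus\mathbb{Q}$ and $b_0=0$. The key observation is that the irrationality of the endpoints makes each interval $(a_{n+1},a_n)\cap\mathbb{Q}$ clopen in $\mathbb{Q}$ and each $(b_n,b_{n+1})\cap\mathbb{Q}$ clopen in $\mathbb{Q}^+$. Defining $\phi$ piecewise by an arbitrary homeomorphism between the $n$-th pieces yields a global homeomorphism $(x^*,a_0)\cap\mathbb{Q}\to\mathbb{Q}^+$. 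The pushforward of $u$ under $\phi$ is an ultrafilter on $\mathbb{Q}^+$, and for each perfect $A\in u$ the image $\phi[A\cap(x^*,a_0)]$ is perfect in $\mathbb{Q}^+$ (as the homeomorphic image of a perfect set) and unbounded, because $A$ accumulates at $x^*$ and hence meets infinitely many of the pieces $(a_{n+1},a_n)$, so $\phi[A]$ meets infinitely many $(b_n,b_{n+1})$.

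The main technical hurdle is this piecewise construction: for $\phi$ to be a genuine homeomorphism rather than merely a bijection, it is essential that the partition pieces be clopen, and this is exactly what the irrational choice of endpoints $a_n,b_n$ guarantees. Once $\phi$ is in hand, the verification that accumulation at $x^*$ translates into unboundedness of the image is immediate from the placement of the pieces.
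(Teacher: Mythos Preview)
Your proof is correct and follows the same outline as the paper's (reduce to $\mathbb{Q}^+$ via $x\mapsto -x$, split on whether $u$ is bounded, and in the bounded case push forward through a homeomorphism that converts convergence to the limit point into unboundedness). The one simplification in the paper is that, instead of your piecewise clopen construction, it takes the homeomorphism to be a single order-isomorphism $(0,r)\cap\mathbb{Q}\to\mathbb{Q}^+$ (respectively an order anti-isomorphism $(r,\infty)\cap\mathbb{Q}\to\mathbb{Q}^+$), which is automatically a homeomorphism and sends neighborhoods of $r$ to neighborhoods of $\infty$ by monotonicity---so the ``main technical hurdle'' you identify disappears.
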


\begin{proof}
The ``if" part is obvious. For the other direction, assume that 
$u$ is a gruff ultrafilter on $\mathbb Q$. Without 
loss of generality, $u$ concentrates on $\mathbb Q^+$, since otherwise 
it would have to concentrate on the set of negative rational numbers 
$\mathbb Q^-$, in which case the function $x\longmapsto -x$ (which 
is an autohomeomorphism of $\mathbb Q$) will map 
$u$ to another gruff ultrafilter $-u$ that concentrates on $\mathbb Q^+$.

Hence we get to assume that $u$ is an ultrafilter on $\mathbb Q^+$ with 
a base of perfect sets. If all of these perfect sets happen to be 
unbounded, we are done. Otherwise $u$ contains a bounded set, 
and hence $u$ must converge to some real number $r$. If $u$ concentrates 
on $(0,r)$, then take an order-isomorphism 
$f:(0,r)\longrightarrow\mathbb Q^+$, and notice that $f$ will 
be a homeomorphism mapping every element of $u$ to an unbounded set. 
Otherwise, $u$ must concentrate on $(r,\infty)$, and so we can take 
an order anti-isomorphism $f:(r,\infty)\longrightarrow\mathbb Q^+$, 
which will map every element of $u$ to an unbounded set. In either 
case, the ultrafilter $f(u)$ will have a base of perfect unbounded sets.
\end{proof}

Throughout the rest of this paper, we will 
write $\mathbb Q$ instead of $\mathbb Q^+$. Similarly, $\mathscr P$ will now denote 
the coideal of subsets of $\mathbb Q$ (i.e. $\mathbb Q^+$) containing 
an \textit{unbounded} 
perfect set, and we will take gruff to mean an ultrafilter on $\mathbb Q$ 
(i.e. $\mathbb Q^+$) with a base of sets that are both perfect and unbounded. 
Lemma~\ref{boundedvsunbounded} guarantees that these changes do not 
make any difference regarding the question of the existence of gruff 
ultrafilters.

\section{A cardinal invariant and its parametrized diamond principle}

We will start by laying down some terminology and results from~\cite{weakdiamond} 
that we will be using throughout this section. The theory of 
parametrized diamond principles involves cardinal 
invariants given by triples $(A,B,E)$ where $|A|\leq\mathfrak c$, 
$|B|\leq\mathfrak c$ and $E\subseteq A\times B$, where we additionally 
require that $(\forall a\in A)(\exists b\in B)(a\ E\ b)$ and that 
$(\forall b\in B)(\exists a\in A)\neg(a\ E\ b)$ (these last two requirements 
are only to ensure existence and nontriviality of the corresponding 
cardinal invariant). The cardinal invariant associated to such a triple 
$(A,B,E)$ (sometimes referred to as the \textit{evaluation} of the triple), 
denoted by $\langle A,B,E\rangle$, is given by
\begin{equation*}
\langle A,B,E\rangle=\min\{|X|: X\subseteq B\wedge(\forall a\in A)(\exists b\in X)(a\ E\ b)\}.
\end{equation*}

We consider cardinal invariants $\langle A,B,E\rangle$ that are Borel, which we take to mean 
that all three entries of the 
corresponding triple $(A,B,E)$, can be viewed as Borel subsets of some Polish space. Then 
the \emph{parametrized diamond principle} of the triple is the statement 
that for every Borel function 
$F:2^{<\omega_1}\longrightarrow A$ (where Borel means that for every $\alpha<\omega_1$ 
the restriction $F\upharpoonright2^{<\alpha}$ is a Borel function) there exists a 
$g:\omega_1\longrightarrow B$ such that for every $f:\omega_1\longrightarrow 2$, it is the case 
that $F(f\upharpoonright\alpha)\ E\ g(\alpha)$ for stationarily many $\alpha<\omega_1$. This 
statement is denoted by $\diamondsuit(A,B,E)$. The fundamental theorem regarding these 
parametrized diamond principles is the following

\begin{theorem}[\cite{weakdiamond}, Thm. 6.6]\label{fundamental}
 Let $(A,B,E)$ be a Borel triple defining a cardinal invariant. Let 
 $\mathbb P$ be a Borel forcing notion such that 
 $\mathbb P\simeq 2^+ \times \mathbb P$, and let 
 $\langle\mathbb P_\alpha,\mathring{\mathbb Q_\alpha}\big|\alpha­<\omega_2\rangle$ be 
 a countable support iteration such that for every $\alpha$, 
 $\mathbb P_\alpha\Vdash``\mathring{\mathbb Q_\alpha}=\mathbb P"$. Assume 
 also that the final step of the iteration, $\mathbb P_{\omega_2}$, is proper. Then 
 \begin{equation*}
  \mathbb P_{\omega_2}\Vdash``\diamondsuit(A,B,E)"\text{ if and only if 
 }\mathbb P_{\omega_2}\Vdash``\langle A,B,E\rangle\leq\omega_1".
 \end{equation*}
\end{theorem}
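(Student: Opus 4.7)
The plan is to prove the two directions separately. The forward implication, $\diamondsuit(A,B,E)\Rightarrow\langle A,B,E\rangle\leq\omega_1$, is a general fact requiring no forcing hypothesis at all. Since $A$ is Borel in a Polish space, fix a Borel surjection $\pi:2^\omega\to A$ and define a Borel $F:2^{<\omega_1}\to A$ by $F(s)=\pi(s\upharpoonright\omega)$ whenever $\mathrm{dom}(s)\geq\omega$, with $F(s)$ any fixed element of $A$ otherwise. Applying $\diamondsuit(A,B,E)$ to this $F$ yields $g:\omega_1\to B$; for any $a\in A$ take $x\in\pi^{-1}(a)$ and extend to $f:\omega_1\to 2$ with $f\upharpoonright\omega=x$, so the guessing property of $g$ produces an $\alpha$ (in fact stationarily many) with $a=F(f\upharpoonright\alpha)\ E\ g(\alpha)$. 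Thus $\ran(g)$ witnesses $\langle A,B,E\rangle\leq\omega_1$.

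For the substantial direction, suppose $V[G_{\omega_2}]\models\langle A,B,E\rangle\leq\omega_1$, witnessed by a sequence $\vec b=\langle b_\xi:\xi<\omega_1\rangle$, and let $\mathring F$ be a name for a Borel $F:2^{<\omega_1}\to A$. My strategy would be a reflection-plus-genericity argument exploiting the length-$\omega_2$ iteration. Because $\omega_1$ is preserved (by properness) and the relevant objects are each coded by at most $\aleph_1$ reals, there is a club $C\subseteq\omega_2$ of ordinals $\delta$ with $\cf(\delta)=\omega_1$ at which $\mathring F$ and a name for $\vec b$ are absorbed into $V^{\mathbb P_\delta}$. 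Pick an increasing sequence $\langle\delta_\alpha:\alpha<\omega_1\rangle$ cofinal in some stationary subset of $C$, and define $g(\alpha)$ to depend on the stage-$\delta_\alpha$ generic contributed by $\mathring{\mathbb Q}_{\delta_\alpha}\simeq\mathbb P$. The self-similarity hypothesis $\mathbb P\simeq 2^+\times\mathbb P$ enters essentially here: it lets us decompose the stage-$\delta_\alpha$ generic so as to extract a free Cohen-like bit, which in turn makes the set of conditions forcing ``$g(\alpha)$ is $E$-related to $F(\mathring f\upharpoonright\alpha)$'' dense below any condition and any sufficiently nice name $\mathring f$.

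The main obstacle I anticipate is converting this pointwise density into the stationarity required by the diamond. Given a putative counterexample consisting of some $f:\omega_1\to 2$ in $V[G_{\omega_2}]$ together with a club $D\subseteq\omega_1$ avoiding the set of ``good'' $\alpha$'s, one would argue that the names $\mathring f$ and $\mathring D$ themselves reflect into some $V^{\mathbb P_\eta}$, and then invoke a pressing-down argument combined with the self-similarity at stages $\delta_\alpha$ beyond $\eta$ to produce some $\alpha\in D$ at which the freedom of the stage-$\delta_\alpha$ generic still forces $F(f\upharpoonright\alpha)\ E\ g(\alpha)$, a contradiction. Making this last step precise --- correctly aligning the reflection levels with the bookkeeping of the $\delta_\alpha$, and ensuring that the self-similarity can be invoked uniformly across stages without having already committed the relevant coordinates to decide $\mathring f$ --- is where the delicate combinatorial content of the theorem lies.
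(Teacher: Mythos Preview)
The paper does not contain a proof of this theorem at all: it is quoted verbatim from Moore--Hru\v{s}\'ak--D\v{z}amonja~\cite{weakdiamond} (their Theorem~6.6) and used as a black box. So there is no ``paper's own proof'' against which to compare your proposal.

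That said, a few remarks on what you wrote. Your forward direction is correct and is in fact the content of \cite[Prop.~2.5]{weakdiamond}, which the present paper also invokes separately (just after Theorem~\ref{boundforperfect}); note that it requires no hypothesis on the forcing whatsoever.

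Your reverse direction, however, is not a proof but a sketch of intentions, and you say as much in your final paragraph. The ingredients you list---reflection of names into an initial segment of the iteration, the r\^ole of the factor $2^+$ in $\mathbb P\simeq 2^+\times\mathbb P$ to supply fresh bits, and a pressing-down argument to obtain stationarity---are indeed the ones used in \cite{weakdiamond}, but the actual argument there is considerably more structured than what you outline. In particular, one does not merely ``pick an increasing sequence $\langle\delta_\alpha:\alpha<\omega_1\rangle$'' and hope; rather, the guessing function $g$ is defined using the Cohen bits contributed by the $2^+$ factors along a fixed cofinal $\omega_1$-sequence in $\omega_2$, and the verification that any branch $f$ is guessed stationarily often proceeds via a careful properness/countable-support argument showing that below any condition and for any name for a club there is an extension forcing a specific $\alpha$ into the club at which the guess succeeds. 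Your description of ``aligning the reflection levels with the bookkeeping'' is exactly the part that is left entirely unspecified, and it is the whole content of the proof. As written, your proposal is an accurate table of contents for the argument in \cite{weakdiamond}, but not a proof.
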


We will say that a set $X$ is \emph{two-sided crowded} if 
for every $x\in X$ and every $\varepsilon\gt0$ there are points $y,z\in X$ such 
that $x-\varepsilon\lt y\lt x\lt z\lt x+\varepsilon$. A two-sided perfect 
set would be a closed, two-sided crowded set. In what follows, we will say  
that $X\subseteq\mathbb Q$ is scattered if no subset of $X$ is two-sided 
crowded. We let
\begin{equation*}
\mathscr B=\{X\subseteq\mathbb Q\big|X\text{ is two-sided perfect and unbounded}\},
\end{equation*}
and note that $\mathscr B$ generates a coideal $\mathscr P'\subseteq\mathscr P$.

We consider here the cardinal invariant $\mathfrak r_P$ to be 
the evaluation of the triple $(\mathcal P (\mathbb Q),\mathscr B,R)$, where 
the relation ``$Y\ R\ X$" ($X$ \emph{reaps} $Y$ modulo $\mathscr P'$, or 
$Y$ \emph{is reaped by} $X$ modulo $\mathscr P'$) means that either 
$X\setminus Y\notin\mathscr P'$ or $X\cap Y\notin\mathscr P'$ ($X$ is 
either contained in, or disjoint from, $Y$, modulo 
$\mathscr P'$). In other words, 
$\mathfrak r_P=\mathfrak r(\mathcal P\left(\mathbb Q\times\omega)/({\mathsf{scattered}\times\mathsf{fin}})\right)$. Equivalently, $\mathfrak r_P$ is the least cardinality of a family $\mathscr X$ of unbounded two-sided 
perfect subsets of $\mathbb Q$ such that for every colouring of the elements 
of $\mathbb Q$ into two colours, there exists an element of $\mathscr X$ 
which is monochromatic, except possibly for a scattered or a bounded subset.

Hence the 
combinatorial principle 
$\diamondsuit(\mathfrak r_P)$ is the statement that for every Borel function 
$F:2^{<\omega_1}\longrightarrow\mathcal P (\mathbb Q)$, there exists a function 
$g:\omega_1\longrightarrow\mathscr B$ (this is, an $\omega_1$-sequence of 
 two-sided perfect unbounded 
subsets of $\mathbb Q$) satisfying that for every $f:\omega_1\longrightarrow2$, 
$g(\alpha)$ will reap $F(f\upharpoonright\alpha)$ modulo $\mathscr P'$ for 
stationarily many $\alpha<\omega_1$.

In order to use the combinatorial principle $\diamondsuit(\mathfrak r_P)$, 
we need a definition and a lemma.

\begin{definition}
If $\mu$ is an ordinal, we say that a $\mu$-sequence 
$\langle X_\alpha\big|\alpha<\mu\rangle$ of subsets of $\mathbb Q$ is \emph{descending 
modulo} $\mathscr P'$ if every $X_\alpha\in\mathscr P'$ and, whenever $\xi<\alpha<\mu$, 
we have that $X_\alpha\setminus X_\xi\notin\mathscr P'$.
\end{definition}

\begin{lemma}\label{pseudoint}
Let $\langle X_n\big|n<\omega\rangle$ be a descending $\omega$-sequence modulo 
$\mathscr P$. Then it is possible to choose, in a Borel way, a two-sided 
perfect
set $X\in\mathscr P$ 
that is almost contained in every $X_n$ modulo $\mathscr P'$ (this is, 
$X\setminus X_n\notin\mathscr P'$).
\end{lemma}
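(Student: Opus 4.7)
The plan is to build $X$ as a disjoint union $X = \bigsqcup_n A_n$, where each $A_n$ is a bounded two-sided perfect subset of $\mathbb{Q}$ chosen inside the finite intersection $W_n := X_0 \cap X_1 \cap \cdots \cap X_n$ and placed entirely above $\sup A_{n-1} + 1$. Provided such $A_n$ can be produced, the resulting $X$ will be two-sided crowded (each $x \in X$ lies in some $A_n$, which is two-sided crowded at $x$, and the remaining $A_m$'s sit in pairwise disjoint bounded intervals away from $x$), closed in $\mathbb{Q}$ (those same disjoint bounded intervals accumulate only at $+\infty$), and unbounded. Crucially, $A_m \subseteq W_m \subseteq X_n$ whenever $m \geq n$, so $X \setminus X_n \subseteq A_0 \cup \cdots \cup A_{n-1}$ is bounded and hence lies in the ideal $\mathscr{I} := \mathcal{P}(\mathbb{Q}) \setminus \mathscr{P}'$.

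Two ingredients drive the recursion. First, $W_n \in \mathscr{P}'$ for every $n$: by induction, $X_{n+1} \setminus W_n = \bigcup_{k \leq n}(X_{n+1} \setminus X_k)$ is a finite union of members of $\mathscr{I}$ (by the descending hypothesis) and thus itself in $\mathscr{I}$, so the coideal property applied to $X_{n+1} = (X_{n+1} \cap W_n) \sqcup (X_{n+1} \setminus W_n)$ delivers $W_{n+1} = X_{n+1} \cap W_n \in \mathscr{P}'$. Second, for every $W \in \mathscr{P}'$ and every $b \in \mathbb{Q}$ there is a bounded two-sided perfect $A \subseteq W \cap (b, \infty)$. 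To see this, fix a two-sided perfect unbounded $Z \subseteq W$, pick $z^* \in Z$ with $z^* > b + 2$, and examine $K := \overline{Z} \cap [z^* - 1, z^* + 1] \subseteq \mathbb{R}$. If $K$ were countable, then by Cantor--Bendixson its non-empty relatively open subset $K \cap (z^* - 1, z^* + 1)$ would contain a point $x$ isolated in $K$; such an $x$ would be isolated in $\overline{Z}$, contradicting either the two-sided crowdedness of $Z$ (if $x \in Z$) or $x$'s being a limit of $Z$ (if $x$ is irrational, and hence $x \notin Z$ yet $x \in \overline{Z}$). Therefore $K$ contains irrationals $\alpha < z^* < \beta$, and $A := Z \cap (\alpha, \beta)$ --- a set cut by irrational endpoints from a two-sided crowded set closed in $\mathbb{Q}$ --- is bounded two-sided perfect and sits in $W \cap (b, \infty)$.

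Iterating the extraction step with $b_{-1} := 0$, $b_n := \sup A_n + 1$, and $X := \bigcup_n A_n$ finishes the construction. For the Borel dependence on $\langle X_n \rangle$, the $W_n$'s are plainly Borel and the extraction map $(W, b) \mapsto A$ can be implemented uniformly as a first-found search over a countable enumeration of finite configurations (candidate rational cutoffs together with finite pieces of a back-and-forth realising the perfect block inside $W$); the existence argument above ensures the search terminates at every stage. The main obstacle is this geometric extraction step: its engine is the Baire-category fact that the $\mathbb{R}$-closure of any two-sided perfect subset of $\mathbb{Q}$ must carry irrational accumulation points arbitrarily close to each of its elements, furnishing the irrational cuts needed to carve out a bounded two-sided perfect piece above a prescribed bound; the coideal bookkeeping and the Borel packaging around it are routine.
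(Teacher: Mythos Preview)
Your proposal is correct and follows the same overall ``block'' strategy as the paper---build $X$ as a union of bounded two-sided perfect pieces marching off to infinity---but the execution differs in two places worth noting.

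First, you work directly with the finite intersections $W_n=X_0\cap\cdots\cap X_n$, proving $W_n\in\mathscr P'$ via the coideal property and then choosing $A_n\subseteq W_n$ outright. The paper instead fixes the clopen partition $I_k=(k\sqrt2,(k+1)\sqrt2)$ and picks $B_n=X_n\cap I_{k_n}$, only arranging that $B_n\setminus X_i$ is \emph{scattered} for $i<n$; so the paper's $X\setminus X_i$ ends up being a bounded piece together with a union of scattered pieces in disjoint clopen intervals, whereas your $X\setminus X_i$ is simply bounded. Your bookkeeping is cleaner here.

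Second, for the extraction of a bounded two-sided perfect block, you argue that the real closure $\overline Z$ of a two-sided perfect $Z\subseteq\mathbb Q$ is perfect in $\mathbb R$, hence uncountable in every nonempty relatively open piece, so irrational cut-points $\alpha<z^*<\beta$ exist and $Z\cap(\alpha,\beta)$ does the job. The paper instead gives a fully explicit back-and-forth: it takes the maximal two-sided crowded subset $C_n\subseteq B_n$ and enumerates finite approximations $F_k\subseteq C_n$ together with clopen intervals covering the complement, producing $P_n=\bigcup_k F_k$. The paper's route makes the Borel dependence completely transparent at each step; your irrational-cut argument is slicker existence-wise but leaves the Borel selection to the ``first-found search'' sketch at the end, which is fine but less explicit than what the paper provides.
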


\begin{proof}
First define, for $n<\omega$, the interval $I_n=\left(n\sqrt{2},(n+1)\sqrt{2}\right)$. Then each 
of the $I_n$ will be a clopen interval in $\mathbb Q$, and 
$\mathbb Q=\bigcup_{n<\omega}I_n$. Note that, for every $n<\omega$, since 
$X_n\in\mathscr P'$ then we must have that $X_n\cap I_n$ is not scattered for infinitely 
many $n<\omega$. Similarly, for $n<m<\omega$, we have that $(X_m\setminus X_n)\cap I_k$ 
must be scattered for almost all $k$. Hence we recursively construct an increasing sequence 
$\langle k_n\big|n<\omega\rangle$, and non-scattered sets $B_n\subseteq I_{k_n}$, as 
follows: $k_0$ is any number such that $I_{k_0}\cap X_0$ is not scattered, and we 
let $B_0=I_{k_0}\cap X_0$. Suppose we have 
picked $k_0,\ldots,k_n$ and $B_0,\ldots,B_n$ such that $B_j\setminus X_i$ is scattered 
whenever $i\leq j\leq n$. Then we pick a $k_{n+1}>k_n$ such that 
$(X_{n+1}\setminus X_i)\cap I_{k_{n+1}}$ is scattered whenever $i\leq n$, and we 
 let $B_{n+1}=X_{n+1}\cap I_{k_{n+1}}$. 

Now we choose, in a Borel way, a two-sided perfect 
subset $P_n\subseteq B_n$. To do this, we first fix $C_n$ to 
be the maximal two-sided crowded subset of $B_n$ (which is the 
union of all two-sided crowded subsets of $B_n$). Now we fix an effective 
enumeration $\{q_k\big|k\lt\omega\}$ of $\mathbb Q$, and recursively 
define finite sets $F_k\subseteq B_n$ (for all $k\lt\omega$) and 
clopen intervals $I_k$ (for $q_k\notin C_n$), as follows: at stage $k$, 
we first choose, for each $x\in F_{k-1}$, the least-indexed $y,z\in C_n$ 
which are within $\frac{1}{2^k}$ of $x$, with $y<x<z$, and which 
do not belong to any $I_i$ 
for $i<k$, and put all those $x,y,z$, into $F_k$. Afterwards, 
if $q_k\notin C_n$ then we  let $I_k$ be a clopen interval centred 
around $q_k$ which does not intersect $F_k$ (and otherwise we do 
not define $I_k$). This way, in the end we get the two-sided perfect set 
$P_n=\bigcup\limits_{k<\omega}F_k\subseteq C_n$, which is two-sided crowded by 
construction, and closed because its complement is exactly 
$\bigcup\limits_{k<\omega \atop q_k\notin C_n}I_k$.

In the end, we  define $X=\bigcup_{n<\omega}P_n$, and we are done. 
\end{proof}

\begin{theorem}\label{diamondimpliesgruff}
$\diamondsuit(\mathfrak r_P)$ implies the existence of a gruff ultrafilter.
\end{theorem}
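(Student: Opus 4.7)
The plan is to use $\diamondsuit(\mathfrak r_P)$ to build, by transfinite recursion of length $\omega_1$, a sequence $\langle X_\alpha:\alpha<\omega_1\rangle$ in $\mathscr B$ that is descending modulo $\mathscr P'$, and then to read off a gruff ultrafilter as $\mathcal U=\{A\subseteq\mathbb Q:\exists\alpha,\ X_\alpha\setminus A\notin\mathscr P'\}$. The key observation supporting the construction is that, because $\mathscr P'$ is a coideal, its complement $\mathscr I=\mathcal P(\mathbb Q)\setminus\mathscr P'$ is a (finitely additive) ideal; this is what will make the successor-step refinements legitimate and will give the required closure properties of $\mathcal U$.

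First I would design a Borel coding function $F:2^{<\omega_1}\to\mathcal P(\mathbb Q)$ through which every subset of $\mathbb Q$ can be queried. Fixing an enumeration $\mathbb Q=\{q_n:n<\omega\}$, for $s\in 2^\alpha$ with $\alpha=\gamma+\omega$ (where $\gamma$ is a limit ordinal or zero) set $F(s)=\{q_n:s(\gamma+n)=1\}$, and $F(s)=\mathbb Q$ otherwise. Given any target $A\subseteq\mathbb Q$, I can choose $f_A\in 2^{\omega_1}$ so that each block $f_A\upharpoonright[\gamma,\gamma+\omega)$ encodes the characteristic function of $A$; then $F(f_A\upharpoonright(\gamma+\omega))=A$ at every limit $\gamma$. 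I now apply $\diamondsuit(\mathfrak r_P)$ to $F$ to obtain $g:\omega_1\to\mathscr B$.

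The recursion builds $\langle X_\alpha\rangle$ as follows. Set $X_0=\mathbb Q$. At a successor stage $\alpha+1$, since $X_\alpha=(X_\alpha\cap g(\alpha))\cup(X_\alpha\setminus g(\alpha))$ and $X_\alpha\in\mathscr P'$, the coideal property forces at least one of these two pieces to lie in $\mathscr P'$; take $X_{\alpha+1}\in\mathscr B$ inside $X_\alpha\cap g(\alpha)$ whenever that piece lies in $\mathscr P'$, and inside $X_\alpha\setminus g(\alpha)$ otherwise. At a limit stage $\alpha$, pick a cofinal $\omega$-sequence of predecessors and apply Lemma~\ref{pseudoint} to get $X_\alpha\in\mathscr B$ almost contained in each one modulo $\mathscr P'$. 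Then, given $A\subseteq\mathbb Q$ and $f_A$ as above, the diamond yields a stationary $S_A\subseteq\{\gamma+\omega:\gamma\text{ limit or }0\}$ at which $g(\alpha)$ reaps $A$ modulo $\mathscr P'$: whenever such an $\alpha$ falls into the ``into $g(\alpha)$'' branch of the construction (so $X_{\alpha+1}\subseteq g(\alpha)$), either $X_{\alpha+1}\setminus A\subseteq g(\alpha)\setminus A\in\mathscr I$ or $X_{\alpha+1}\cap A\subseteq g(\alpha)\cap A\in\mathscr I$, placing $A$ or $\mathbb Q\setminus A$ in $\mathcal U$. For gruffness, given $A\in\mathcal U$ with witness $X_\alpha$, the set $X_\alpha\cap A\in\mathscr P'$; a Cantor--Bendixson-style extraction produces $P\in\mathscr B$ with $P\subseteq X_\alpha\cap A$ and $(X_\alpha\cap A)\setminus P\in\mathscr I$, so that $X_\alpha\setminus P=(X_\alpha\setminus A)\cup((X_\alpha\cap A)\setminus P)\in\mathscr I$ (a finite union of ideal sets), whence $P\in\mathcal U$ with $P\subseteq A$ perfect.

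The main obstacle is to guarantee that for every $A$ the stationary set $S_A$ meets the ``into $g(\alpha)$'' branch: if the construction were always forced to the ``out of $g(\alpha)$'' branch at $S_A$-stages (that is, $X_\alpha\cap g(\alpha)\in\mathscr I$), then the diamond's information about $A$ would not transfer to $X_{\alpha+1}$. The standard fix is to refine $F$ so that $F(f\upharpoonright\alpha)$ also depends on a Borel decoding of the running construction $\langle X_\beta^f:\beta<\alpha\rangle$ from $f$ (which just requires $f_A$ to additionally code the construction) --- for instance, $F(f\upharpoonright\alpha)=A\cap X_\alpha^f$. In the first reaping case $g(\alpha)\subseteq^{\mathscr I}A\cap X_\alpha$, which automatically forces $g(\alpha)\cap X_\alpha\in\mathscr P'$ and hence the ``into'' branch; the second reaping case is handled by pairing the coideal decomposition of $X_\alpha\cap A$ with the refinement choice, which is where I expect most of the technical work of the proof to live.
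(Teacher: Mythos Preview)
Your overall architecture---build a $\mathscr P'$-descending sequence $\langle X_\alpha\rangle$ using Lemma~\ref{pseudoint} at limits and the diamond oracle at successors, then read off the ultrafilter---is exactly the paper's. You also correctly isolate the real difficulty: the oracle value $g(\alpha)\in\mathscr B$ is fixed in advance and need not interact well with the running $X_\alpha$.

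However, your proposed repair $F(f\upharpoonright\alpha)=A\cap X_\alpha^f$ does not close the gap. In your ``second reaping case'' one has $g(\alpha)\cap(A\cap X_\alpha)\in\mathscr I$, and nothing prevents this from holding for the trivial reason that $g(\alpha)\cap X_\alpha\in\mathscr I$. When that happens the reaping hypothesis carries no information whatsoever about $A$, the construction is forced into the ``out of $g(\alpha)$'' branch, and $X_{\alpha+1}\subseteq X_\alpha\setminus g(\alpha)$ decides nothing about $A$. No ``coideal decomposition of $X_\alpha\cap A$'' can help here, because the problem is not with $A$ but with the fact that $g(\alpha)$ lives in the wrong part of $\mathbb Q$ altogether. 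Since the oracle $g$ is fixed before the recursion even starts, there is no way to force $g(\alpha)$ to meet $X_\alpha$ substantially.

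The paper sidesteps the whole ``into versus out'' dichotomy with a different device. At stage $\alpha$ one first forms the pseudo-intersection $B$ of $\langle X_\xi:\xi<\alpha\rangle$ via Lemma~\ref{pseudoint} (in a Borel way, using a fixed cofinal $\omega$-sequence), and then---this is the point you are missing---fixes a Borel order-isomorphism $h\colon B\to\mathbb Q$ (possible because $B$ is two-sided crowded, hence a countable dense linear order without endpoints). The function $F$ is defined so that $F(\langle\vec X\upharpoonright\alpha,A\rangle)$ is the $h$-image of $B\cap A$ (or of $B\setminus A$, whichever lies in $\mathscr P'$), and one sets $X_\alpha=h^{-1}[g(\alpha)]$. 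Now $X_\alpha\subseteq B$ automatically, it is two-sided perfect unbounded because $h$ preserves these notions, and when the diamond hits, the fact that $g(\alpha)$ reaps $h[B\cap A]$ in $\mathbb Q$ pulls back along $h$ to say that $X_\alpha$ reaps $A$ inside $B$. There is no branch to choose: the homeomorphism guarantees that the oracle's answer always lands inside the current pseudo-intersection.
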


\begin{proof}
 By suitable coding, we consider elements of $2^{<\omega_1}$ that represent pairs 
 $\langle\vec{A},A\rangle$ such that $A\subseteq\mathbb Q$ and 
 $\vec{A}=\langle A_\xi\big|\xi<\alpha\rangle$ is a sequence of two-sided 
 perfect unbounded 
 subsets of 
 $\mathbb Q$ that is descending modulo $\mathscr P$. We 
 choose an increasing sequence $\langle\alpha_n\big|n<\omega\rangle$, cofinal 
 in $\alpha$, and we build, in a Borel way, a two-sided perfect unbounded set 
 $B\subseteq\mathbb Q$ satisfying 
 that $(\forall n<\omega)(B\setminus A_{\alpha_n}\notin\mathscr P)$, using 
 Lemma~\ref{pseudoint}. Since $B$ is two-sided crowded, it is a countable 
 linear order with no endpoints, which means that we can map $B$ homeomorphically to 
 $\mathbb Q$ in a Borel way (by constructing an order-isomorphism between $B$ and 
 $\mathbb Q$ in the usual back-and-forth way) and define 
 $F(\langle\vec{A},A\rangle)$ to be the either the image of $B\cap A$ under 
 this mapping, if that image belongs to $\mathscr P'$, or the image of 
 $B\setminus A$ otherwise (in this construction, if $\alpha$ is a successor 
 cardinal, $\alpha=\xi+1$, then there is no need to pick a cofinal sequence, 
 and we can let $B=A_\xi$ and perform the rest of the construction in the 
 exact same way). Use $\diamondsuit(\mathfrak r_P)$ to 
 get a $g:\omega_1\longrightarrow\mathscr B$ satisfying that for every 
 $f:\omega_1\longrightarrow2$, $F(f\upharpoonright\alpha)$ is reaped, modulo 
 $\mathscr P'$, by the 
 perfect unbounded set $g(\alpha)$ for stationarily many $\alpha<\omega_1$. 
 We use $g$ to recursively 
 construct our gruff ultrafilter.
 
 So assume that we have constructed a sequence of two-sided
 perfect unbounded sets 
 $\langle X_\xi\big|\xi<\alpha\rangle$ that is descending modulo $\mathscr P'$. 
 Using the same cofinal sequence $\langle\alpha_n\big|n<\omega\rangle$ as in the 
 previous paragraph, and in the exact same Borel way, construct a two-sided 
 perfect $X$ which is 
 contained, modulo $\mathscr P'$, in each $X_{\alpha_n}$, and map it 
 homeomorphically onto $\mathbb Q$. We let $X_\alpha\subseteq X$ be the preimage of 
 $g(\alpha)$ under that homeomorphism, so that $X_\alpha$ is a perfect unbounded 
 subset of $\mathbb Q$ (if $\alpha=\xi+1$ then we let 
 $X=X_\xi$ and perform the rest of the construction in the exact same way). This 
 gives us an $\omega_1$-sequence of perfect unbounded sets, descending modulo 
 $\mathscr P'$, $\langle X_\alpha\big|\alpha<\omega_1\rangle$. Clearly the members 
 of this sequence generate a filter, which then will be  gruff provided this filter 
 is an ultrafilter. To see that the filter generated by 
 $\{X_\alpha\big|\alpha<\omega_1\}$ is indeed an ultrafilter, let $A\subseteq\mathbb Q$ 
 and let $f:\omega_1\longrightarrow2$ be the branch of $2^{\omega_1}$ that represents 
 $(\vec{X},A)$ under the relevant coding. Then by choice of $g$, for stationarily 
 many $\alpha<\omega_1$ we will have that $F(\vec{X}\upharpoonright\alpha,A)$ is reaped 
 modulo $\mathscr P$ by $g(\alpha)$. But $F(\vec{X}\upharpoonright\alpha,A)$ is
 the image of either $X\cap A$ or of $X\setminus A$ under the Borel homeomorphism 
 onto $\mathbb Q$ that was obtained at stage $\alpha$, where $X$ is the 
 set obtained at that stage using Lemma~\ref{pseudoint}; and $X_\alpha$ is  
 the preimage of $g(\alpha)$ under that same homeomorphism. Hence we can conclude 
 that $X_\alpha$ reaps $A$ modulo $\mathscr P'$. 
 Since the filter generated by $\{X_\alpha\big|\alpha<\omega_1\}$ consists only 
 of elements in $\mathscr P'$, then we can conclude that such a filter is an 
 ultrafilter, and we are done.
\end{proof}

In order for the previous theorem to be of any use, we need to exhibit models 
where $\diamondsuit(\mathfrak r_P)$ holds. Recall that 
by~\cite[Thm. 6.6]{weakdiamond}, in many of the models of Set Theory that are 
obtained via countable support iterations of proper 
forcing notions, we will have that $\diamondsuit(\mathfrak r_P)$ holds 
if and only if $\mathfrak r_P=\omega_1$.  The following theorem gives 
some bounds for this cardinal invariant. First, we  consider the cardinal invariant $\mathfrak r_\mathbb Q$, introduced 
in~\cite{balcar-hrusak-hernandez}. This invariant is the reaping 
number of the Boolean algebra $\mathcal P(\mathbb Q)/\mathsf{nwd}$, equivalently, 
the minimal size of a family of somewhere dense subsets of $\mathbb Q$ such that 
every set $A\subseteq\mathbb Q$ either contains or is disjoint from an element of 
the family. It is known~\cite[Thm. 3.6]{balcar-hrusak-hernandez} that 
$\max\{\mathfrak r, \cofm\}\leq \mathfrak r_\mathbb Q\leq\mathfrak i$.

\begin{theorem}\label{boundforperfect}
\begin{equation*}
\max\{\mathfrak r,\mathfrak d\}\leq\mathfrak r_P\leq\mathfrak r_\mathbb Q.
\end{equation*}
\end{theorem}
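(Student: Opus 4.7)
The theorem comprises three inequalities. My plan is to use the clopen partition $\mathbb{Q} = \bigsqcup_{n<\omega} I_n$ from the proof of Lemma \ref{pseudoint} as a common bridge for the two lower bounds, and a countable-gluing construction for the upper bound.

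For $\mathfrak r \leq \mathfrak r_P$: given an $\mathfrak r_P$-witnessing family $\mathscr X$, assign to each $X \in \mathscr X$ the set $Y_X = \{n<\omega : X \cap I_n \text{ is not scattered}\}$. Two-sided-perfectness and unboundedness of $X$ force $Y_X \in [\omega]^\omega$, for otherwise one could decompose $X$ as a bounded piece plus a scattered remainder. For $B \subseteq \omega$, set $\hat B = \bigcup_{n \in B} I_n$: if $Y_X \cap B$ is infinite, then selecting a two-sided perfect subset inside each $X \cap I_n$ (for $n \in Y_X \cap B$) and taking their union produces a two-sided perfect unbounded subset of $X \cap \hat B$, so $X \cap \hat B \in \mathscr P'$. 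Contrapositively, reaping of $\hat B$ by $X$ modulo $\mathscr P'$ transfers to reaping of $B$ by $Y_X$ modulo finite, so $\{Y_X : X \in \mathscr X\}$ is a reaping family on $\omega$.

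For $\mathfrak d \leq \mathfrak r_P$: enumerate $Y_X$ increasingly as $\{n_0^X < n_1^X < \cdots\}$ and set $f_X(k) = n_k^X$. To show $\{f_X : X \in \mathscr X\}$ is dominating, given a strictly increasing $g \in \omega^\omega$ I would encode $g$ into a test set $A_g \subseteq \mathbb Q$ by choosing within each $I_n$ a pair of disjoint clopen two-sided perfect subsets whose labels depend on $g$; reaping of $A_g$ by some $X$ modulo $\mathscr P'$ would then force $f_X \geq^* g$, because the opposite case would produce non-scattered contributions to both $X \cap A_g$ and $X \setminus A_g$ in infinitely many $I_n$'s, yielding two-sided perfect unbounded subsets on both sides. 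The delicate point is arranging the coding so that the reaping lands on the dominating side rather than the dominated one.

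For $\mathfrak r_P \leq \mathfrak r_\mathbb Q$: take an $\mathfrak r_\mathbb Q$-witnessing family $\mathscr Y$. First refine each $Y \in \mathscr Y$ to a two-sided perfect subset of $\mathbb Q$ dense in some interval $J_Y$ (via the extraction of Lemma \ref{pseudoint}; this preserves reaping modulo nwd since subsets of nwd sets are nwd). Then form glued unbounded analogues $\tilde Y_\sigma = \bigsqcup_{n<\omega} \phi_n(\sigma(n))$ for $\sigma \in \mathscr Y^\omega$, where $\phi_n : J_{\sigma(n)} \to I_n$ is an order-preserving homeomorphism. Given $A \subseteq \mathbb Q$, picking $\sigma(n) \in \mathscr Y$ to reap $\phi_n^{-1}(A \cap I_n)$ modulo nwd yields a candidate $\tilde Y_\sigma$ for reaping $A$ modulo $\mathscr P'$.

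The hard part is twofold. First, one must bridge the gap between slice-wise reaping modulo nwd and global reaping modulo $\mathscr P'$: a nwd subset of $\mathbb Q$ may itself be two-sided crowded (as seen from $\mathbb Q$-traces of Cantor sets), so the slice-wise argument must be sharpened — either by a further refinement of $\mathscr Y$ ensuring that the nwd residue in each slice is actually scattered, or by threading the reaping through a coding that produces an scattered global trace by construction. Second, the family $\{\tilde Y_\sigma\}$ has cardinality $\mathfrak r_\mathbb Q^{\aleph_0}$, which must be reconciled with $\mathfrak r_\mathbb Q$; this is handled by invoking the lower bound $\mathfrak r_\mathbb Q \geq \cofm$ already mentioned above and standard cardinal arithmetic.
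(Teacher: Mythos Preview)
Your proposal is substantially more complicated than the paper's argument, and two of the three pieces have genuine gaps.

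For $\mathfrak r_P\leq\mathfrak r_{\mathbb Q}$ you are overlooking the key simplification. The paper uses the equivalent description of $\mathfrak r_{\mathbb Q}$ as the least size of a family $\mathcal R$ of somewhere dense sets such that every $A\subseteq\mathbb Q$ \emph{literally contains or is disjoint from} some member of $\mathcal R$. Once you have that, you simply choose a perfect $P_X\subseteq X$ for each $X\in\mathcal R$ and you are done: if $P_X\subseteq A$ or $P_X\cap A=\varnothing$ then trivially one of $P_X\cap A$, $P_X\setminus A$ is empty, hence not in $\mathscr P'$. There is no need to glue, no $\mathscr Y^\omega$, and hence no cardinal-arithmetic obstacle. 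Your approach manufactures both difficulties you then struggle to resolve: the nwd-versus-scattered problem disappears because you never have to pass through ``modulo nwd'' at all, and the $\mathfrak r_{\mathbb Q}^{\aleph_0}$ blowup is not obviously fixable --- $\mathfrak r_{\mathbb Q}\geq\cofm$ does not yield $\mathfrak r_{\mathbb Q}^{\aleph_0}=\mathfrak r_{\mathbb Q}$ in general (consider $\mathfrak r_{\mathbb Q}=\omega_1<\mathfrak c$, as in the Sacks model).

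For $\mathfrak d\leq\mathfrak r_P$ your plan is incomplete in the place you flag as ``delicate'': you never specify the coding $A_g$, and it is not clear the enumeration functions $f_X$ of the sets $Y_X$ must form a dominating family. The paper argues contrapositively and concretely: given $\{P_\alpha:\alpha<\kappa\}$ with $\kappa<\mathfrak d$, it enumerates each $I_n=\{q_{n,k}:k<\omega\}$, builds for each $\alpha$ a function $f_\alpha$ recording the least index at which $P_\alpha$ meets the next nonempty $I_k$, takes $f$ not dominated by any $f_\alpha$, and sets $A=\{q_{i,j}:j\leq f(i)\}$. Then $A\cap I_n$ is finite for every $n$, so $P_\alpha\setminus A\in\mathscr P'$, while $f_\alpha(k)\leq f(k)$ infinitely often forces $P_\alpha\cap A$ infinite --- hence no $P_\alpha$ reaps $A$.

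Your argument for $\mathfrak r\leq\mathfrak r_P$ is correct but roundabout; the paper simply takes a clopen basis $\{U_n\}$ and observes that $\{P\cap U_n:P\in\mathcal R,\ P\cap U_n\neq\varnothing\}$ is already an ordinary reaping family of the same size.
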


\begin{proof}
To see that $\mathfrak r\leq \mathfrak r_P$, note that if $\mathcal R$ is a family 
of perfect subsets of $\mathbb Q$ witnessing the definition of $\mathfrak r_P$, 
and if $\{U_n: n\in\omega\}$ is a fixed basis for the topology of $\mathbb Q$ 
consisting  of clopen sets, then one can define 
the family
\begin{equation*}
 \mathcal R'=\{ P\cap U_n: P\cap U_n \neq\emptyset\}.
\end{equation*}
Then $\mathcal R'$ is again a family of perfect sets, has the same size as $\mathcal R$, 
and is reaping, i.e. for every $Y\subseteq  \mathbb Q$ there is a 
$P\in\mathcal R'$ such that $P\subseteq Y$ or $P\cap Y=\emptyset$. Thus 
$\mathcal R'$ is a witness to $\mathfrak r$.

Now let us  see that
$\mathfrak r_P\geq\mathfrak d$. Let $\{P_\alpha\big|\alpha<\kappa\}$ be a 
family of perfect sets, and we will argue that if $\kappa<\mathfrak d$ then this family 
cannot be reaping. For each $n<\omega$ let 
$I_n=\left(n\sqrt{2},(n+1)\sqrt{2}\right)$, so that $\mathbb Q=\bigcup_{n<\omega}I_n$. 
Now enumerate $I_n=\{q_{n,k}\big|k<\omega\}$, and for each $\alpha<\kappa$ we will 
define $f_\alpha:\omega\longrightarrow\omega$ as follows: For every $n<\omega$, 
assuming we have defined $f_\alpha(i)$ for all $i<n$, we look at the least $k\geq n$ 
such that $I_k\cap P_\alpha\neq\varnothing$ and we let $j$ be the least such that 
$q_{k,j}\in I_k\cap P_\alpha$. Then we define 
$f_\alpha(n)=f_\alpha(n+1)=\cdots=f_\alpha(k)=j$. Now if $\kappa<\mathfrak d$, we 
can find an $f$ that is not dominated by any $f_\alpha$. Without loss of generality 
$f$ is increasing, so if we let $A=\{q_{i,j}\big|j\leq f(i)\}$ then for every 
$\alpha<\kappa$, the fact that $f_\alpha(k)\leq f(k)$ for infinitely many $k$ implies 
that $P_\alpha\cap A$ is infinite, whereas $A\cap I_n$ is finite for every $n<\omega$ 
so $P_\alpha\setminus A$ must be in $\mathscr P'$ and therefore also be infinite. 
Hence no $P_\alpha$ reaps $A$.

 To see that $\mathfrak r_P\leq\mathfrak r_\mathbb Q$ take  a family $\mathcal R$, 
of size $\mathfrak r_\mathbb Q$, of 
somewhere dense subsets of $\mathbb Q$ such that every set $A\subseteq\mathbb Q$ 
is reaped by an element of $\mathcal R$. For each 
element $X\in\mathcal R$, pick a perfect subset $P_X\subseteq X$. Then the family 
$\mathcal R'=\{P_X\big|X\in\mathcal R\}$ is a witness to $\mathfrak r_P$.
\end{proof}

We now briefly explain how to modify the above construction in order 
to ensure that our gruff ultrafilter is also a Q-point. If we are assuming that 
$\diamondsuit(\mathfrak r_P)$ holds, then by~\cite[Prop. 2.5]{weakdiamond} 
it follows that $\mathfrak r_P=\omega_1$ and so by Theorem~\ref{boundforperfect}, we 
can conclude that $\mathfrak d=\omega_1$. Now by~\cite[Thm. 2.10]{blass}, this 
means that there is a sequence $\langle\mathcal I_\alpha\big|\alpha<\omega_1\rangle$ 
of partitions of $\omega$ into intervals which is dominating, i.e. for every partition 
$\mathcal J=\{J_n\big|n<\omega\}$ of $\omega$ into intervals (we will always 
assume our partitions into intervals to be ordered increasingly), there exists an 
$\alpha<\omega_1$ such that $\mathcal J$ is dominated by $\mathcal I_\alpha$, which 
means that if $\mathcal I_\alpha=\{I_\alpha^n\big|n<\omega\}$ then 
$(\forall^\infty n<\omega)(\exists k<\omega)(J_k\subseteq I_\alpha^n)$. It is 
easily seen that this implies that for every partition $\mathcal J=\{J_n\big|n<\omega\}$ 
of $\omega$ into intervals , there is 
$\alpha<\omega_1$ such that 
$(\forall^\infty n­<\omega)(\exists k<\omega)(J_n\subseteq I_\alpha^k\cup I_\alpha^{k+1})$. 
This clearly implies that, if $u$ is an ultrafilter satisfying 
$(\forall\alpha<\omega_1)(\exists X\in u)(\forall n<\omega)(|X\cap I_\alpha^n|\leq 1)$, 
then $u$ is a Q-point. Thus, if we modify the construction in 
the proof of Theorem~\ref{diamondimpliesgruff} to ensure that each 
$X_\alpha$ thus constructed satisfies $(\forall n<\omega)(|X\cap I_\alpha^n|\leq 1)$, 
then we will succeed in ensuring that our gruff ultrafilter is also a 
Q-point. But notice that each $X_\alpha$ is constructed to be a subset of some two-sided 
perfect set $X$ whose existence is guaranteed by invoking Lemma~\ref{pseudoint}, so 
all we need to do is to modify the proof of this lemma to ensure that, if we are given a partition 
$\mathcal I=\{I_n\big|n<\omega\}$ of $\omega$ into intervals, the two-sided perfect 
set $X$ obtained by this the lemma satisfies 
$(\forall n<\omega)(|X\cap I_n|\leq 1)$. This is very easy to do, since the elements of 
the set $X$ are successively chosen to be the least-indexed ones satisfying certain 
condition, so it suffices to also require them to not belong to any $I_n$ to which some 
formerly chosen element of $X$ belongs. It follows that, by introducing small modifications 
in the proofs of Lemma~\ref{pseudoint} and Theorem~\ref{diamondimpliesgruff}, we can 
construct a gruff ultrafilter that is at the same time a Q-point.

\begin{corollary}
There are gruff Q-points in the Sacks model.
\end{corollary}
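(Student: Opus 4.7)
The plan is to deduce the corollary directly from Theorem~\ref{diamondimpliesgruff} (together with the Q-point modification explained in the paragraph immediately preceding the corollary) by showing that $\diamondsuit(\mathfrak{r}_P)$ holds in the Sacks model. The route to this is through the ``if and only if'' half of the characterization Theorem~\ref{fundamental}: once we verify that Sacks forcing $\mathbb{S}$ fits the hypotheses of that theorem and that $\mathbb{S}_{\omega_2}\Vdash\mathfrak{r}_P\leq\omega_1$, the parametrized diamond principle $\diamondsuit(\mathfrak{r}_P)$ is automatic.

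First I would check the hypotheses of Theorem~\ref{fundamental}. Sacks forcing is a Borel (in fact $\boldsymbol{\Sigma}^1_1$) forcing notion, it is proper (so the countable support iteration $\mathbb{S}_{\omega_2}$ of length $\omega_2$ is proper as well), and it satisfies the self-product condition $\mathbb{S}\simeq 2^+\times \mathbb{S}$ in the sense of~\cite{weakdiamond} (one can split an arbitrary perfect tree below its stem into two isomorphic cones). These are standard and I would simply cite them.

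Next I need to verify that $\mathbb{S}_{\omega_2}\Vdash\mathfrak{r}_P\leq\omega_1$. For this I would invoke the upper bound $\mathfrak{r}_P\leq\mathfrak{r}_{\mathbb{Q}}$ from Theorem~\ref{boundforperfect}, and then note that $\mathfrak{r}_{\mathbb{Q}}\leq\mathfrak{i}$ from~\cite{balcar-hrusak-hernandez}. It is well known that in the Sacks model $\mathfrak{i}=\omega_1$ (indeed essentially all ``small'' cardinal invariants take value $\omega_1$ there), so $\mathfrak{r}_P=\omega_1$ in the Sacks model. Theorem~\ref{fundamental} then yields $\diamondsuit(\mathfrak{r}_P)$.

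Finally, I would apply Theorem~\ref{diamondimpliesgruff} together with the modifications of Lemma~\ref{pseudoint} and of the proof of Theorem~\ref{diamondimpliesgruff} described in the paragraph above the corollary, which turn the constructed gruff ultrafilter into a Q-point under the hypothesis $\mathfrak{d}=\omega_1$ (which holds in the Sacks model, again by Theorem~\ref{boundforperfect} since $\mathfrak{d}\leq\mathfrak{r}_P=\omega_1$). The main obstacle is really just to confirm that $\mathbb{S}$ satisfies the framework of Theorem~\ref{fundamental} and that the upper bound $\mathfrak{r}_{\mathbb{Q}}=\omega_1$ holds in the Sacks model; both are standard results once located in the literature, so the corollary follows with essentially no additional combinatorics.
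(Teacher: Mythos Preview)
Your proposal is correct and follows essentially the same route as the paper: use $\mathfrak r_P\leq\mathfrak r_{\mathbb Q}\leq\mathfrak i=\omega_1$ in the Sacks model (Theorem~\ref{boundforperfect}), apply Theorem~\ref{fundamental} to conclude $\diamondsuit(\mathfrak r_P)$, and then invoke Theorem~\ref{diamondimpliesgruff} together with the Q-point modification from the preceding paragraph. The only difference is that you spell out the verification of the hypotheses of Theorem~\ref{fundamental} for Sacks forcing, which the paper leaves implicit.
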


\begin{proof}
Since  $\mathfrak r_\mathbb Q=\mathfrak i=\omega_1$ in Sacks model, Theorem~\ref{boundforperfect} 
together with Theorem~\ref{fundamental}
implies that $\diamondsuit(\mathfrak r_P)$ holds in this model, so the 
result follows from Theorem~\ref{diamondimpliesgruff} together 
with the observations outlined on the previous paragraph.
\end{proof}

\begin{question} Is either of the two inequalities in Theorem~\ref{boundforperfect} 
consistently strict?
\end{question}

\section{There are gruff ultrafilters if $\mathfrak d=\mathfrak c$}

In this section we will obtain the existence of gruff ultrafilters from 
the cardinal invariant assumption that $\mathfrak d=\mathfrak c$. In order to 
do this, we will first lay down some notation and preliminary results that 
are central to our proof, and that will also be relevant also for the next 
section.

We describe a method that, given a function $f:\omega\longrightarrow\omega$, 
allows us to use $f$ to construct a perfect subset of every $X\in\mathscr P$. 
From now on we will fix an effective enumeration 
$\{q_n\big|n<\omega\}$ of 
$\mathbb Q$. Now, given $f:\omega\longrightarrow\omega$, define 
the following clopen subsets of $\mathbb Q$:
\begin{equation*}
J_n^f=\left(q_n-\frac{\sqrt{2}}{k},q_n+\frac{\sqrt{2}}{k}\right),
\end{equation*}
where $k$ is the least possible natural number that ensures $q_m\notin J_n^f$ for every $n\neq m\leq f(n)$. Thus $J_n^f$ is a clopen interval, centred at $q_n$, which is just barely small enough so that it does not contain any of the finitely many $q_m$ with $m\neq n$ and $m\leq f(n)$. 
Clearly, the faster the function $f$ grows, the smaller the interval $J_n^f$ will be. In 
other words, if $f(n)\geq g(n)$ then $J_n^f\subseteq J_n^g$. Note also that 
$J_n^f$ is completely 
determined by the single value $f(n)$.

Now, given any subset $X\subseteq\mathbb Q$, we define
\begin{equation*}
 X(f)=\mathbb Q\setminus\left(\bigcup_{q_n\notin X}J_n^f\right)\subseteq X.
\end{equation*}

Then $X(f)$ is a closed set, since it is the complement of an open set.
Some properties of the sets $X(f)$ that we will need later on, and 
that are easy to check, are the following: 
\begin{itemize}
\item If $X\subseteq Y$, then $X(f)\subseteq Y(f)$,
\item $X(f)\cap Y(f)=(X\cap Y)(f)$, and similarly for intersections of any 
 finite amount of sets,
\item If $g\leq f$, then $X(g)\subseteq X(f)$,

\item If $f$ is unbounded, then every rational number belongs to 
 only finitely many of the $J_n^f$. In fact, if $f$ is strictly increasing then 
 the rational number $q_n$ can only belong to at most $n$ of the $J_m^f$, since 
 in this case, whenever $m\gt n$ we have that $n\leq f(n)\lt f(m)$ and therefore 
 $q_n\notin J_m^f$.
\end{itemize}

We wish the set  $X(f)$ to be  not only 
closed, but also perfect. Of course, we can only hope to achieve this 
when $X\in\mathscr P$.

\begin{lemma}\label{specialfunction}
 For every crowded unbounded subset $X\subseteq\mathbb Q$, there exists a function 
$f_X$ such that, whenever $g$ is an increasing function with $g\not\leq^*f_X$, 
$X(g)$ is crowded unbounded (and hence perfect unbounded).
\end{lemma}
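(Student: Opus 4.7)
The plan is to construct $f_X$ by a recursive procedure that, in parallel, selects a countable family of ``witness'' rationals in $X$ whose collective survival in $X(g)$ will force $X(g)$ to be perfect unbounded whenever $g$ is increasing and $g\not\leq^*f_X$. First I fix an enumeration $(m_t,k_t)_{t<\omega}$ of all pairs $(m,k)$ with $q_m\in X$ and $k\in\omega$, arranged so that each such pair is visited infinitely often.

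At stage $t$, I choose a ``crowdedness witness'' $q_{w_t}\in X$ with $0<|q_{w_t}-q_{m_t}|<2^{-k_t}$ (possible because $X$ is crowded) and an ``unboundedness witness'' $q_{v_t}\in X$ with $q_{v_t}>t$ (possible because $X$ is unbounded); both are taken with indices $w_t,v_t$ so large that $w_t,v_t>f_X(n)$ for every $n$ with $q_n\notin X$ at which $f_X(n)$ has already been defined, so that $q_{w_t},q_{v_t}\notin J_n^{f_X}$ automatically (a rational with index $\leq f_X(n)$ lies outside $J_n^{f_X}$ by definition). I then extend $f_X$ over a further initial segment of $\omega$ by setting $f_X(n)\geq\max\{w_s,v_s:s\leq t\}$ at each newly-considered $n$ with $q_n\notin X$, thereby maintaining the invariant that all previously-chosen witnesses lie outside $J_n^{f_X}$.

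For the verification, the key observation is that $g$ being increasing forces $g(n)\geq n$, so any $q_j$ can belong to $J_n^g$ only when $n<j$. Consequently, for each witness $q_{w_t}$ the only possible ``killers'' are the finitely many indices $n<w_t$ with $q_n\notin X$. For any such $n$ with $g(n)>f_X(n)$ we have $J_n^g\subseteq J_n^{f_X}$, and the construction guarantees $q_{w_t}\notin J_n^{f_X}$, hence also $q_{w_t}\notin J_n^g$; the identical analysis applies to each $q_{v_t}$.

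The main obstacle is the possibility that, at a dangerous $n<w_t$, we have $g(n)\leq f_X(n)$, so $J_n^g\supseteq J_n^{f_X}$ may fail to exclude $q_{w_t}$. To defeat this I rely on the infinite repetition of each pair $(m,k)$ in the enumeration, producing infinitely many witnesses associated with it; a combinatorial analysis using $g\not\leq^*f_X$ together with the increasing nature of $g$ will show that for each pair only finitely many of its witnesses can be killed, so infinitely many survive. The surviving witnesses form a dense subset of $X$ inside $X(g)$, and because $X(g)$ is closed in $\mathbb Q$ while $X$ is crowded, every point of $X(g)$ is a limit point of $X(g)$; combined with the unboundedness provided by the $q_{v_t}$, we conclude that $X(g)$ is crowded and unbounded, hence perfect unbounded.
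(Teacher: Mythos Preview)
Your construction contains a direction error that undermines the verification. You choose $w_t>f_X(n)$ for each already-defined $n$ and then assert that this forces $q_{w_t}\notin J_n^{f_X}$; but the definition of $J_n^{f_X}$ only excludes rationals $q_j$ with $j\leq f_X(n)$ (exactly as your own parenthetical states), so taking $w_t>f_X(n)$ gives no information whatsoever about whether $q_{w_t}\in J_n^{f_X}$. Consequently, for any $n$ on which $f_X$ was defined \emph{before} stage $t$, nothing in your construction prevents $q_{w_t}$ from lying in $J_n^{f_X}$, and your sub-case ``$g(n)>f_X(n)$'' no longer yields $q_{w_t}\notin J_n^g$. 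The invariant you claim to maintain is simply not maintained.

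The handling of the remaining case $g(n)\leq f_X(n)$ is not an argument but a promise of one. The hypothesis $g\not\leq^* f_X$ only guarantees infinitely many $n$ with $g(n)>f_X(n)$; the complementary set of ``bad'' $n$ may also be infinite, and a single such $n$ with $q_n\notin X$ sitting close to $q_m$ can have $J_n^g$ swallow every witness you produced for the pair $(m,k)$. Nothing in the construction prevents this, so the claim that only finitely many witnesses for each pair are killed is unsupported.

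The paper avoids both difficulties by comparing against the fixed family $J_l^{\id}$ rather than $J_l^{f_X}$ when selecting witnesses. Since $g$ increasing gives $\id\leq g$ and hence $J_l^g\subseteq J_l^{\id}$ for every $l$, a witness chosen to avoid the relevant $J_l^{\id}$ automatically avoids $J_l^g$ regardless of how $g$ and $f_X$ compare at $l$; there is no case split. The value $f_X(n)$ then serves only as an upper bound on the index $k$ of the witness, so that at any $n$ with $f_X(n)<g(n)$ one gets $k\leq f_X(n)<g(n)\leq g(l)$ for all $l\geq n$, disposing of all later indices in one stroke.
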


\begin{proof}
 Recursively define $f_X$ by 
 $f_X(0)=\min\{k\lt\omega\big|k\gt 0\text{ and }q_k\in X\}$; and once 
 $f_X(n-1)$ has been defined, for every $m\lt n$ with $q_m\in X$
 we define the sets
 \begin{equation*}
  A_n^m=\{k\lt\omega\big|f_X(n-1)\lt k\text{, }q_k\in X\text{, }
  |q_m-q_k|\lt\frac{1}{2^n}\text{ and }
  q_k\notin\bigcup_{m\lt l\lt n}J_l^{\id}\},
 \end{equation*}
 where $\id:\omega\longrightarrow\omega$ is  the identity 
 function. We also define the set 
 \begin{equation*}
  A_n=\{k\lt\omega\big|f_X(n-1)\lt k\text{, }q_k\in X\text{, }
  q_k\gt n\text{ and }
  q_k\notin\bigcup_{l\lt n}J_l^{\id}\}{\color{cyan}.}
 \end{equation*}
 The $A_n^m$ are nonempty because $X$ is crowded, and 
 $A_n$ is nonempty because $X$ is unbounded. So we can let 
 \begin{equation*}
  f_X(n)=\max(\{\min(A_n^m)\big|m\lt n\text{ and }q_m\in X\}\cup\{\min(A_n)\}).
 \end{equation*}
 This finishes the definition of $f_X$.
Now let $g$ be an increasing function such that $g\not\leq^*f_X$. Note that, since $g$ 
is increasing, we have $\id\leq g$, and so $J_n^g\subseteq J_n^{\id}$ for every 
$n\lt\omega$. To prove that $X(g)$ is unbounded, let $m\in\mathbb N$, and we will find 
an element 
$x\in X(g)$ with $x\gt m$. To that end, we fix an $n\gt m$ such that $f_X(n)\lt g(n)$. 
Letting $k=\min(A_n)$, by definition 
we have that $x=q_k\in X$, $q_k\gt n\gt m$ and $q_k\notin\bigcup\limits_{l\lt n}J_l^{\id}$, 
and therefore 
$q_k\notin\bigcup\limits_{l\lt n}J_l^g$. Now since $k\leq f_X(n)\lt g(n)$, we also have 
that $q_k\notin J_l^g$ for $l\geq n$, as long as $l\neq k$. But in particular, 
$q_k\notin J_l^g$ whenever $q_l\notin X$, thus $x=q_k\in X(g)$.

Now to prove that $X(g)$ is crowded, we pick an arbitrary $q_m\in X(g)$ and an 
$\varepsilon\gt 0$, and try to find some $x\in X(g)$ with $x\neq q_m$ and 
$|x-q_m|\lt\varepsilon$. Note that, since $q_m\in X(g)$, then we must have in particular 
that $q_m\notin J_l^g$ whenever $l\lt m$. We let $N$ be large enough that 
$\frac{1}{2^N}\lt\varepsilon$ and  
$(q_m-\frac{1}{2^N},q_m+\frac{1}{2^N})\cap J_l^g=\varnothing$ for all $l\lt m$. 
We now pick an $n\gt N$ such that $f_X(n)\lt g(n)$, and let $k=\min(A_n^m)$. 
Then by definition we have that $x=q_k\in X$, $|q_m-q_k|\lt\frac{1}{2^n}\lt\varepsilon$, 
and 
$q_k\notin J_l^{\id}\supseteq J_l^g$ for all $m\lt l\lt n$. Also, since 
$k\leq f_X(n)\lt g(n)$, we will have $q_k\notin J_l^g$ for all $l\geq n$, as long as 
$l\neq k$. Moreover, our choice of $n$  (and $N$) also ensures that 
$q_k\notin J_l^n$ for $l\lt m$. 
In other words, the only cases where we might have $q_k\in J_l^n$ would be 
when $l$ is either equal to $m$, or equal to $k$; but since $q_k,q_m\in X$, we 
conclude that $k=q_k\in X(g)$.
\end{proof}

This lemma will be of key importance for the rest of 
this article. We will first use it for proving the following result, which 
strengthens both the theorem of Copl\'akov\'a and Hart (which 
assumes $\mathfrak b=\mathfrak c$), and the one of van Douwen (which 
assumes $\covm=\mathfrak c$).

\begin{theorem}
If $\mathfrak d=\mathfrak c$, then there exists a gruff ultrafilter.
\end{theorem}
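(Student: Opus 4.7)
The plan is to build the gruff ultrafilter by a transfinite recursion of length $\mathfrak c$. Using $\mathfrak d=\mathfrak c$, I fix a dominating family $\{d_\alpha:\alpha<\mathfrak c\}$ of strictly increasing functions and enumerate $\mathcal P(\mathbb Q)=\{A_\alpha:\alpha<\mathfrak c\}$. I construct a sequence $\langle X_\alpha:\alpha<\mathfrak c\rangle$ of perfect unbounded subsets of $\mathbb Q$, descending modulo $\mathscr P$, so that $X_{\alpha+1}$ decides $A_\alpha$ in the sense that either $X_{\alpha+1}\cap A_\alpha$ or $X_{\alpha+1}\setminus A_\alpha$ fails to lie in $\mathscr P$. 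The filter generated by $\{X_\alpha:\alpha<\mathfrak c\}$ is then the desired gruff ultrafilter.

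At a successor stage $\alpha+1$, the coideal property of $\mathscr P$ gives that one of $X_\alpha\cap A_\alpha$ or $X_\alpha\setminus A_\alpha$ lies in $\mathscr P$; I pick a crowded unbounded subset $Y$ of it, apply Lemma~\ref{specialfunction} to obtain $f_Y$, and set $X_{\alpha+1}=Y(g)$ for any increasing $g\not\leq^* f_Y$ (for instance $g=d_\beta$ for any $\beta$ with $d_\beta\not\leq^* f_Y$). At a limit stage $\alpha$ of countable cofinality, Lemma~\ref{pseudoint} applied to any cofinal $\omega$-subsequence $\alpha_n\nearrow\alpha$ yields a perfect unbounded $X_\alpha$ almost contained in each $X_{\alpha_n}$ modulo $\mathscr P$; that $\mathscr P$ is a coideal and the prior sequence is descending modulo $\mathscr P$ then imply that $X_\alpha\setminus X_\xi\notin\mathscr P$ for \emph{every} $\xi<\alpha$, not only the cofinal ones.

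The main obstacle is the limit stage $\alpha$ of uncountable cofinality, where no cofinal $\omega$-sequence is available and Lemma~\ref{pseudoint} does not apply directly. This is exactly where $\mathfrak d=\mathfrak c$ becomes essential: the family $\{f_{X_\xi}:\xi<\alpha\}$ has cardinality less than $\mathfrak d$, hence fails to be dominating, and so I can select a single increasing $g$ with $g\not\leq^* f_{X_\xi}$ for every $\xi<\alpha$. By Lemma~\ref{specialfunction} each $X_\xi(g)$ is then a perfect unbounded subset of $X_\xi$, and because $X\mapsto X(g)$ is monotone and commutes with intersections, this single $g$ uniformly compresses the whole family into perfect sets coherent with each other. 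From here I extract the required perfect unbounded $X_\alpha$ almost below each $X_\xi$ modulo $\mathscr P$ by adapting the Borel carving procedure of Lemma~\ref{pseudoint} — partitioning $\mathbb Q$ by the intervals $I_n=(n\sqrt 2,(n+1)\sqrt 2)$ and recursively picking two-sided perfect pieces from appropriately chosen $X_\xi(g)\cap I_{k_n}$ — with the key point being that the uniform calibration by $g$ makes this carving valid against \emph{all} prior $X_\xi$ simultaneously. Making this final extraction work against the uncountably many previous sets is the hardest technical step of the argument.
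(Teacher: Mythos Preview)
Your proposal has a genuine gap at the limit stages of uncountable cofinality, and the gap stems from an unnecessarily complicated overall architecture.

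The paper does \emph{not} build a tower descending modulo $\mathscr P$. Instead it maintains only the finite intersection property: at each stage $\alpha$ (with no case distinction between successor and limit) it chooses $A\in\{A_\alpha,\mathbb Q\setminus A_\alpha\}$ compatible with the filter generated so far, takes $B$ to be its maximal crowded unbounded subset, and for every \emph{finite} $F\subseteq\{X_\xi:\xi<\alpha\}$ considers the function $f_{B_F}$ associated (via Lemma~\ref{specialfunction}) to the maximal crowded unbounded subset $B_F$ of $\bigl(\bigcap F\bigr)\cap B$. Since there are fewer than $\mathfrak c=\mathfrak d$ such finite $F$, a single increasing $g$ with $g\not\le^* f_{B_F}$ for all $F$ exists, and one sets $X_\alpha=B(g)$. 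The identity
\[
X_\alpha\cap\bigcap_{\xi\in F}X_\xi \;\supseteq\; B(g)\cap\bigcap_{\xi\in F}X_\xi(g)\;=\;\Bigl(B\cap\bigcap_{\xi\in F}X_\xi\Bigr)(g)\;\supseteq\;B_F(g)
\]
then shows directly that every finite intersection remains in $\mathscr P$. No pseudointersections are ever taken, and Lemma~\ref{pseudoint} is not used at all.

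Your tower approach, by contrast, must at each limit $\alpha$ of uncountable cofinality produce a single perfect unbounded set below (mod $\mathscr P$) \emph{all} of the uncountably many previous $X_\xi$. You only arrange $g\not\le^* f_{X_\xi}$ for the individual $X_\xi$, which makes each $X_\xi(g)$ perfect unbounded but says nothing about the crowded core of $X_\xi\cap X_\eta$; for that you would need $g\not\le^* f_C$ with $C$ the crowded kernel of such intersections, which you never secured. The ``carving'' you then propose uses countably many intervals $I_{k_n}$ and can therefore handle only countably many $X_\xi$ directly; no argument is given for the remaining ones, and your own remark that this is ``the hardest technical step'' is effectively an acknowledgement that it has not been done. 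Once you index the functions to avoid by finite subsets $F$ rather than by single ordinals $\xi$, the tower scaffolding becomes superfluous and you arrive at the paper's uniform, limit-free construction.
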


\begin{proof}
Let $\{A_\alpha\big|\alpha<\mathfrak c\}$ 
 be an enumeration of all subsets of $\mathbb Q$. We will recursively construct 
 sets $X_\alpha\subseteq\mathbb Q$, satisfying the following conditions for every 
 $\alpha<\mathfrak c$:
 \begin{enumerate}
  \item $X_\alpha$ is a perfect unbounded subset of $\mathbb Q$,
  \item $X_\alpha$ is either contained in or disjoint from $A_\alpha$, and
  \item the family $\{X_\xi\big|­\xi\leq\alpha\}$ generates a filter all 
   of whose elements belong to $\mathscr P$ (i.e. they contain 
   a crowded unbounded set).
 \end{enumerate}
 So suppose that we have already constructed $\{X_\xi\big|\xi<\alpha\}$ 
 satisfying all three conditions. Since $\mathscr P$ is a coideal, we 
 can choose an $A\in\{A_\alpha,\mathbb Q\setminus A_\alpha\}$ such that 
 the family $\{X_\xi\big|\xi\lt\alpha\}\cup\{A\}$ generates a 
 filter with all elements belonging to $\mathscr P$. If we now let 
 $B$ be the maximal crowded unbounded subset of $A$ (this $B$ can be seen 
 as either the union of all crowded subsets of $A$, or the part of $A$ that 
 remains after performing the Cantor-Bendixson process), we can see that 
 $\{X_\xi\big|\xi\lt\alpha\}\cup\{B\}$ still generates a filter all of 
 whose elements belong to $\mathscr P$.

 For each finite $F\subseteq\{X_\xi\big|\xi\lt\alpha\}$, we take the 
 maximal crowded unbounded subset $B_F\subseteq\left(\bigcap F\right)\cap B$ (since by hypothesis, the latter set belongs to 
 $\mathscr P$), and consider the function $f_{B_F}:\omega\longrightarrow\omega$ as 
given by 
 Lemma~\ref{specialfunction}. Since there are 
 $\max\{|\alpha|,\omega\}<\mathfrak c$ 
 possible $F\subseteq\{X_\xi\big|\xi\lt\alpha\}$, by $\mathfrak d=\mathfrak c$ we can 
 choose a $g:\omega\longrightarrow\omega$ such that for each of the $F$, we have 
 $g\not\leq^*f_{B_F}$. We now let $X_\alpha=B(g)$. Since in particular 
 $g\not\leq^*f_{B}$ (since $B=B_\varnothing$), $X_\alpha$ will be a perfect 
 unbounded subset of $B$, and for every finite 
 $F\subseteq\{X_\xi\big|\xi\lt\alpha\}$, we notice that 
 \begin{equation*}
  X_\alpha\cap\left(\bigcap_{\xi\in F}X_\xi\right)\supseteq B(g)\cap\left(\bigcap_{\xi\in F}X_\xi(g)\right)=\left(B\cap\left(\bigcap_{\xi\in F}X_\xi\right)\right)(g)\supseteq B_F(g),
 \end{equation*}
 and the rightmost set above is crowded unbounded. Thus our $X_\alpha$ can be added 
 to the previously obtained $X_\xi$ while preserving the three required properties, 
 and so the recursive construction can continue. In the end we  let 
 $u$ be the filter generated by $\{X_\alpha\big|\alpha\lt\mathfrak c\}$, which 
 will clearly be a gruff ultrafilter.
 \end{proof}

\section{Gruff ultrafilters in the Random real model}

We will now proceed to prove that there are gruff ultrafilters in the Random real model. 
We will closely follow a strategy  used by Paul E. 
Cohen (not to 
be confused with the Paul J. Cohen 
that discovered forcing!) in his proof that there are P-points in the 
Random 
model~\cite{cohen}. For this purpose,
we introduce the following definition.

\begin{definition}\label{defpathway}
A continuous increasing 
$\omega_1$-sequence $\langle A_\alpha\big|\alpha\lt\omega_1\rangle$
of reals (elements of $\omega^\omega$) will be called a 
\textbf{strong pathway} if it satisfies the following three properties:
\begin{enumerate}
 \item $\bigcup\limits_{\alpha<\omega_1}A_\alpha=\omega^\omega$,
 \item for every $\alpha<\beta<\omega_1$ there exists a $g\in A_\beta$ which 
  is not dominated by any $f\in A_\alpha$.
 \item for every $\alpha\lt\omega_1$, if 
  $f_1,\ldots,f_n\in A_\alpha$ and $f\in\omega^\omega$ is definable 
  from $f_1,\ldots,f_n$, then $f\in A_\alpha$ (i.e. $A_\alpha$ is closed 
  under set-theoretic definability),
\end{enumerate}
\end{definition}

Cohen~\cite{cohen} defined a pathway to be a continuous, increasing 
$\omega_1$-sequence satisfying requirements (1) and (2) from 
Definition~\ref{defpathway}, along with the requirement (strictly weaker 
than (3) in the aforementioned Definition) that each $A_\alpha$ is 
closed under joins (the join of $f,g$ is the function mapping 
$2n$ to $f(n)$ and $2n+1$ to $g(n)$) and Turing-reducibility. In other 
words, in a pathway, we can only ensure that $f\in A_\alpha$ if 
it is explicitly (algorithmically) computable (in finitely many steps) from 
$f_1,\ldots,f_n\in A_\alpha$. Since this weak closure will not be enough 
for our purposes, we adapted Cohen's definition by demanding closure 
under any set-theoretic construction.

Now our proof will be done in two main steps. The first step is proving 
that there is a strong pathway in the Random real model, and the second step is 
showing that the existence of a strong pathway implies the existence of 
a gruff ultrafilter. Each of the following two theorems realizes each of 
these two steps.

\begin{theorem}\label{existspathway}
Take a ground model $\V$ that satisfies $\ch$, let $\lambda$ be 
a cardinal in $\V$, and let $\mathcal R_\lambda$ be the forcing 
that adds $\lambda$ many Random reals (this is, $\mathcal R_\lambda$ 
consists of all non-null sets in the measure algebra of $2^\lambda$, 
ordered by inclusion modulo null sets). If $r:\lambda\longrightarrow2$ 
is the sequence of Random reals added by $\mathcal R_\lambda$ (this is, 
if $r$ is the unique element that belongs to the 
intersection of an $(\mathcal R_\lambda,\V)$-generic filter), then 
in $\V[r]$ there exists a strong pathway.
\end{theorem}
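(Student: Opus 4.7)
The plan is to construct the sequence $\langle A_\alpha : \alpha < \omega_1 \rangle$ by transfinite recursion inside $V[r]$, adapting the strategy of P.~E.~Cohen~\cite{cohen} from closure under joins and Turing reducibility to closure under full set-theoretic definability. The set-up exploits two basic facts: because $V\models\ch$ and $\mathcal R_\lambda$ is c.c.c., every real in $V[r]$ admits a nice $\mathcal R_\lambda$-name in $V$ whose support is a countable subset of $\lambda$; and because random forcing is $\omega^\omega$-bounding together with $V\models\ch$, we have $\mathfrak d^{V[r]}=\aleph_1$. I would fix in $V$ a continuous increasing $\omega_1$-chain $\langle S_\alpha : \alpha < \omega_1\rangle$ of subsets of $\lambda$ together with an enumeration $\langle \dot z_\xi : \xi < \omega_1\rangle$ of nice names for reals of $V[r]$, organized using the bookkeeping provided by the $S_\alpha$.

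The recursion proceeds as follows. At a successor stage $\alpha=\beta+1$, I would (a) adjoin $z_\beta$, the interpretation of the $\beta$-th listed nice name, to drive condition (1); (b) adjoin a witness $g_\beta \in \omega^\omega$ not $\le^*$-dominated by any $f \in A_\beta$, ensuring condition (2); and (c) let $A_\alpha$ be the set-theoretic definable closure in $V[r]$ of $A_\beta \cup \{z_\beta, g_\beta\}$, giving condition (3). At a limit stage I would set $A_\alpha = \bigcup_{\beta<\alpha} A_\beta$, which is automatically closed under definability. The invariant to be maintained is that $A_\beta$ is not a dominating family: since the definable closure of a countable set of reals is countable (countably many formulas, each with finitely many parameters from the set), one keeps $|A_\beta| \le \aleph_0 < \mathfrak d^{V[r]}$ along the recursion, so the required witness $g_\beta$ exists. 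Condition (2) then follows because for $\beta < \alpha$ we have $g_\beta \in A_{\beta+1} \subseteq A_\alpha$ and $g_\beta \not\le^* f$ for every $f \in A_\beta$, while (1) is forced by the enumeration.

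The main obstacle is arranging condition (1). A union of $\omega_1$-many countable sets has cardinality $\aleph_1$, so if $\mathfrak c^{V[r]} > \aleph_1$ — precisely the situation needed for this theorem's intended consequences, such as the production of a gruff ultrafilter in a model where $\mathfrak d < \mathfrak c$ — a naive enumeration cannot cover $\omega^\omega$. To reach the extra reals one must allow the $A_\alpha$ to grow beyond countable, but then the $\omega^\omega$-bounding property of random forcing puts pressure on the non-domination invariant, since $V \cap \omega^\omega$ is itself dominating in $V[r]$ and capturing too large a chunk of it renders $A_\beta$ dominating. Reconciling these two requirements — organizing the enumeration via the chain $\langle S_\alpha \rangle$ so that each stage adjoins only reals whose definable closures bring in no new cofinal piece of $V \cap \omega^\omega$, while still eventually catching every name — is where the specific measure-theoretic structure of random forcing and the ground-model $\ch$ must be used most essentially, and it is the step I expect to be the main technical obstacle.
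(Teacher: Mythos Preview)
You have correctly identified the central tension: the $A_\alpha$ must together cover $\omega^\omega$ (which has size $\mathfrak c > \aleph_1$ in the interesting case), yet each $A_\alpha$ must fail to be dominating. Your recursive construction in $\V[r]$, keeping each $A_\alpha$ countable via definable closures, cannot achieve (1), and you say as much. But you stop short of a resolution, so the proposal has a genuine gap at exactly the point that matters.

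The paper dissolves the tension by abandoning any recursion in $\V[r]$. Working in $\V$, one builds a continuous $\omega_1$-chain of countable elementary submodels $M_\alpha \prec H(\theta)$ with $\omega^\omega \cap \V \subseteq \bigcup_\alpha M_\alpha$ (this uses $\ch$) and with each $M_{\alpha+1}$ containing a real dominating $M_\alpha \cap \omega^\omega$. Then one sets
\[
A_\alpha \;=\; \bigcup\Bigl\{\, \omega^\omega \cap M_\alpha[r\circ\pi] \;:\; \pi \in \V,\ \pi:\omega\to\lambda \text{ injective} \,\Bigr\}.
\]
Two ideas make this work. First, although $A_\alpha$ may have cardinality as large as $\lambda^{\aleph_0}$, condition (2) still holds: since random forcing is $\omega^\omega$-bounding, every real in any $M_\alpha[r\circ\pi]$ is dominated by some ground-model real in $M_\alpha$, and all of those are dominated by a single $g \in M_{\alpha+1}\subseteq A_{\alpha+1}$. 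The correct invariant is not ``$A_\alpha$ is small'' but ``the ground-model reals needed to bound $A_\alpha$ form a fixed countable set''. Second, closure under definability (3) becomes automatic once one quantifies over \emph{all} $\pi$ simultaneously: given $f_1,\ldots,f_n$ with $f_i \in M_\alpha[r\circ\pi_i]$, one manufactures a single injection $\pi$ with $\ran(\pi)=\bigcup_i \ran(\pi_i)$; then each $r\circ\pi_i$ is recoverable from $r\circ\pi$ inside $M_\alpha[r\circ\pi]$, so anything definable from the $f_i$ already lies there. Condition (1) follows because any real in $\V[r]$ is determined by countably many ground-model reals (eventually in some $M_\alpha$) together with a countable piece of $r$ (captured by some $\pi$).

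Your chain $\langle S_\alpha\rangle$ of subsets of $\lambda$ gestures in the right direction, but the point is precisely \emph{not} to filter the countable pieces of $r$ through an $\omega_1$-enumeration: one must admit every countable piece of $r$ at every stage, and let the $\omega_1$-filtration act only on the ground-model side via the $M_\alpha$.
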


\begin{proof}
We will start working in the ground model $\V$. Since $\V\vDash\ch$, we can 
fix an enumeration 
$\langle f_\alpha\big|\alpha<\omega_1\rangle$ of $\omega^\omega$. We now choose 
a large enough cardinal $\theta$, and construct a continuous increasing 
$\omega_1$-chain of countable elementary submodels 
$M_0\prec M_1\prec\cdots\prec M_\alpha\prec\cdots\prec H(\theta)$ satisfying, 
for every $\alpha\lt\omega_1$, that $f_\alpha\in M_{\alpha+1}$ and also that 
some $f\in M_{\alpha+1}$ dominates all elements of $\omega^\omega\cap M_\alpha$.
We note that, if $\pi:\omega\longrightarrow\lambda$ is an injective function 
from $\V$, then $r\circ\pi:\omega\longrightarrow 2$ is a Random real over 
$\V$ (since it will avoid any Borel null set whose code belongs to $\V$), and 
hence also over each of the $M_\alpha$ (since Borel codes that belong to $M_\alpha$ 
also belong to $\V$, and by elementarity). We therefore define 
the $A_\alpha$ to be given by
\begin{equation*}
A_\alpha=\bigcup_{\pi:\omega\longrightarrow\lambda \atop 
\pi\in\V\text{ is injective}}\omega^\omega\cap M_\alpha[r\circ\pi].
\end{equation*}
We now proceed to prove that the sequence 
$\langle A_\alpha\big|\alpha\lt\omega_1\rangle$ (which clearly is 
a continuous increasing sequence) satisfies all three requirements 
of the definition of a strong pathway.
\begin{enumerate}
 \item Given a real $f\in\V[r]$, the structure of $\mathcal R_\lambda$ 
 as a measure algebra, along with the fact that it is a c.c.c. forcing 
 notion, imply that $f$ can be computed from countably many bits from $r$, 
 along with countably many reals. In other words, there are 
 $f_0,f_1,\ldots,f_n,\ldots\in\V$ and there is an injection 
 $\pi:\omega\longrightarrow\lambda$ such that, if 
 $\alpha\lt\omega_1$ is large enough that all $f_i\in M_\alpha$, 
 then $f\in M_\alpha[r\circ\pi]$. This proves that 
 $\bigcup_{\alpha\lt\omega_1}A_\alpha=\omega^\omega$.
 \item Let $\alpha\lt\beta\lt\omega_1$ and $\pi:\omega\longrightarrow\lambda$ be an 
 injection. Since $\mathcal R_\lambda$ is an $\omega^\omega$-bounding forcing 
 notion, every element of $M_\alpha[r\circ\pi]$ (which is a Random extension 
 of $M_\alpha$) is dominated by some real in $M_\alpha$, and all reals in 
 $M_\alpha$ are in turn dominated by a single real $g\in M_\beta$. Hence 
 this $g$ dominates all elements of $A­_\alpha$ (in particular, $g$ is not 
 dominated by any $f\in A_\alpha$).
 \item Let $f$ be set-theoretically definable from $f_1,\ldots,f_n\in A_\alpha$. 
 There are injective functions $\pi_i:\omega\longrightarrow\lambda$, for 
 $1\leq i\leq n$, such that $f_i\in M_\alpha[r\circ\pi_i]$. For each nonempty 
 subset $a\subseteq n$, define
 \begin{equation*}
  \Lambda_a=\left(\bigcap_{i\in a}\ran(f_i)\right)\setminus\left(\bigcup_{j\notin a}\ran(f_j)\right)\subseteq\lambda.
 \end{equation*}
 Note that the $\Lambda_a$ are all countable, and pairwise disjoint. Now, 
 working in $\V$, pick an effective partition of $\omega$ into $2^n-1$ cells 
 $N_a$ (indexed by the nonempty subsets $a\subseteq n$), with each $N_a$ of cardinality 
 $|\Lambda_a|$, and let $\pi:\omega\longrightarrow\lambda$ be an injection 
 which bijectively maps each $N_a$ onto $\Lambda_a$. Since the $\Lambda_a$ are 
 definable, they inhabit $M_\alpha$, and so do the sets 
 $R_i=\bigcup_{i\in a}N_a$, which 
 allow us to reconstruct $r\circ\pi_i$ from $r\circ\pi$ 
 (note that $\pi[R_i]=\ran(f_i)$). Thus for each $i$ we have that 
 \begin{equation*}
  f_i\in M_\alpha[r\circ\pi_i]\subseteq M_\alpha[r\circ\pi],
 \end{equation*}
 and therefore it must be the case that $f\in M_\alpha[r\circ\pi]\subseteq A_\alpha$, 
 so $A­_\alpha$ has the required closure, and we are done.
\end{enumerate}

\end{proof}

In the proof of the following theorem, we shall be 
using the same effective enumeration $\{q_n\big|n<\omega\}$ of 
$\mathbb Q$ that we used in the previous section, along with 
the definitions of the $J_n^f$ and $X(f)$ for $n\lt\omega$, 
$f:\omega\longrightarrow\omega$, and $X\subseteq\mathbb Q$. 
Also, given a set $X\in\mathscr P$, we will occasionally
invoke the function $f_X$ whose 
existence is guaranteed by Lemma~\ref{specialfunction}.
Finally, we will use the fact that for every 
$X\subseteq\mathbb Q$, if $Y$ is the maximal crowded subset 
of $X$ ($Y$ can be defined to be the union of all crowded 
subsets of $X$, or alternatively, the end result of the 
transfinite Cantor-Bendixson decomposition of $X$), then 
the function $\chi_{\{n\lt\omega\big|q_n\in Y\}}$ is 
definable from $\chi_{\{n\lt\omega\big|q_n\in X\}}$.

\begin{theorem}
Let $\lambda$ be any infinite cardinal number in a ground model 
$\V$ that satisfies $\ch$, and let $\mathcal R_\lambda$ be the forcing 
notion that adds $\lambda$ many Random reals to $\V$. If 
$r:\lambda\longrightarrow 2$ is the generic function
added by $\mathcal R_\lambda$, 
then in $\mathbf{V}[r]$ there is a gruff ultrafilter.
\end{theorem}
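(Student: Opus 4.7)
The plan is to imitate the recursive construction from Theorem~3.1, with the strong pathway from Theorem~\ref{existspathway} replacing the cardinal hypothesis $\mathfrak{d}=\mathfrak{c}$. First I would fix in $\V[r]$ a strong pathway $\langle A_\alpha:\alpha<\omega_1\rangle$. I would then build by recursion a sequence $\langle X_\alpha:\alpha<\omega_1\rangle$ of perfect unbounded subsets of $\mathbb{Q}$ maintaining the invariants: (a)~$X_\alpha\in A_\alpha$ at every stage (identifying $X_\alpha$ with its characteristic function in $2^\omega\subseteq\omega^\omega$), (b)~every finite intersection from $\{X_\xi:\xi\le\alpha\}$ lies in $\mathscr{P}$, and (c)~every $A\subseteq\mathbb{Q}$ is eventually decided, so that the filter generated by the tower is a gruff ultrafilter.

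At a successor stage $\alpha+1$ the mechanics are those of Theorem~3.1: pick via bookkeeping a subset $A\in A_\alpha$ to be decided, choose the compatible side $A'\in\{A,\mathbb{Q}\setminus A\}$ via the coideal property of $\mathscr{P}$, and let $B$ be the maximal crowded unbounded subset of $A'$. For each finite $F\subseteq\alpha$, the maximal crowded unbounded set $B_F\subseteq B\cap\bigcap_{\xi\in F}X_\xi$ and the function $f_{B_F}$ from Lemma~\ref{specialfunction} are set-theoretically definable from data that, by inductive hypothesis, already sits in $A_\alpha$; pathway property~(3) then places $B_F,f_{B_F}\in A_\alpha$. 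Invoking pathway property~(2) I obtain $g\in A_{\alpha+1}$ not dominated by any $f\in A_\alpha$, so $g\not\le^* f_{B_F}$ for every such $F$. Setting $X_{\alpha+1}=B(g)$ yields a perfect unbounded set by Lemma~\ref{specialfunction}, lying in $A_{\alpha+1}$ by property~(3); and the calculation
\[
X_{\alpha+1}\cap\bigcap_{\xi\in F}X_\xi\supseteq B_F(g),
\]
copied verbatim from Theorem~3.1, preserves invariant~(b). Limit stages require no new element to be added.

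The \textbf{main difficulty} is the bookkeeping for (c). In the Random model with $\lambda>\omega_1$ one has $\mathfrak{c}>\omega_1$, so there are potentially far more subsets of $\mathbb{Q}$ to decide than recursion stages available. The rescue comes from the uniformity of property~(2) combined with closure~(3): the single $g\in A_{\alpha+1}$ chosen at stage $\alpha+1$ simultaneously witnesses $g\not\le^* f_{B_F}$ for \emph{every} $A\in A_\alpha$ and every finite $F$, so all of $A_\alpha\cap\mathcal{P}(\mathbb{Q})$ can be addressed inside the single stage $\alpha+1$. Since by property~(1) every $A\subseteq\mathbb{Q}$ belongs to some $A_\gamma$, an appropriate bookkeeping---in direct analogy with Cohen's production of a P-point from his weaker pathway---ensures every $A$ is eventually decided and the resulting filter is the desired gruff ultrafilter in $\V[r]$.
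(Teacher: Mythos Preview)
Your overall strategy is right, and you have correctly located the main difficulty: since $A_\alpha$ may have size $\mathfrak c$, you cannot decide one subset per stage, and a single function $g\in A_{\alpha+1}$ must handle all of $A_\alpha\cap\mathcal P(\mathbb Q)$ at once. But your description of the successor step does not actually do this. You write that at stage $\alpha+1$ you ``pick via bookkeeping a subset $A\in A_\alpha$'', choose a compatible side $A'$, and set $X_{\alpha+1}=B(g)$ for $B$ the maximal crowded unbounded subset of $A'$; this produces \emph{one} new set deciding \emph{one} $A$. The later paragraph then asserts that ``all of $A_\alpha\cap\mathcal P(\mathbb Q)$ can be addressed inside the single stage $\alpha+1$'' via ``an appropriate bookkeeping''. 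These two descriptions are not compatible, and the second is not spelled out. The real issue is not that $g$ fails to work for all the relevant $f_{B_F}$ --- you are right that it does --- but rather that you must choose, for every $A$ coded in $A_\alpha$, a side $A'\in\{A,\mathbb Q\setminus A\}$ in such a way that all chosen sides together with the earlier $X_\xi$'s still generate a filter contained in $\mathscr P$. Independent side-choices made via the coideal property need not be mutually compatible, and no bookkeeping of length $\omega_1$ will enumerate $A_\alpha$.

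The paper resolves this by abandoning the ``tower of single sets'' picture and instead building an increasing continuous chain of \emph{filters} $\mathcal F_\alpha\subseteq\mathscr P$, each with a basis of perfect unbounded sets coded in $A_\alpha$. At a successor step one first extends $\mathcal F_\alpha$ to an arbitrary ultrafilter $\mathcal G\subseteq\mathscr P$ (possible because $\mathscr P$ is a coideal); this $\mathcal G$ makes all the side-choices at once, coherently. One then lets $\mathcal F_{\alpha+1}$ be the upward closure of
\[
\{X(g_\alpha):X\in\mathcal G,\ X\text{ is crowded unbounded and coded in }A_\alpha\}.
\]
Each generator $X(g_\alpha)$ is perfect unbounded by Lemma~\ref{specialfunction} (since $f_X\in A_\alpha$ and $g_\alpha\not\leq^* f_X$), is coded in $A_{\alpha+1}$ by closure under definability, and finite intersections stay in the family because $X_1(g_\alpha)\cap\cdots\cap X_n(g_\alpha)=(X_1\cap\cdots\cap X_n)(g_\alpha)\supseteq X(g_\alpha)$ for $X$ the maximal crowded unbounded subset of the intersection. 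Every $A$ coded in $A_\alpha$ is then decided because the side $\mathcal G$ chose for it contains such an $X(g_\alpha)$. The auxiliary ultrafilter $\mathcal G$ is the missing ingredient in your proposal; note that $\mathcal G$ itself need not be coded anywhere --- only the individual generators of $\mathcal F_{\alpha+1}$ must be.
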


\begin{proof}
By Theorem~\ref{existspathway}, we can let 
$\langle A_\alpha\big|\alpha\lt\omega_1\rangle$ be a strong pathway in 
$\V[r]$.
For each $\alpha$, choose a function $g_\alpha\in A_{\alpha+1}$ 
which is not 
dominated by any element of $A_\alpha$. We will recursively construct a 
continuous increasing sequence 
of filters 
$\mathcal F_\alpha\subseteq\mathscr P$, satisfying the following for every 
$\alpha\lt\omega_1$:
\begin{enumerate}
\item $\mathcal F_\alpha$ has a basis of perfect unbounded sets $P\subseteq\mathbb Q$ 
 such that 
\begin{equation*}
\chi_{\{n\lt\omega\big|q_n\in P\}}\in A_\alpha,
\end{equation*}

\item for every $X\subseteq\mathbb Q$ such that 
$\chi_{\{n\lt\omega\big|q_n\in X\}}\in A_\alpha$, either $X$ or 
 $\mathbb Q\setminus X$ belongs to $\mathcal F_{\alpha+1}$.
\end{enumerate}
If we succeed in performing such a construction, then clearly 
$u=\bigcup\limits_{\alpha\lt\omega_1}\mathcal F_\alpha$ will be an ultrafilter 
(by condition (2)), 
with a basis of perfect unbounded sets (by condition (1)), i.e. a gruff ultrafilter.

We start off with $\mathcal F_0=\{\mathbb Q\}$, and when $\alpha$ is a limit 
ordinal we  let $\mathcal F_\alpha=\bigcup_{\beta<\alpha}\mathcal F_\beta$. We only 
need to show that the $\mathcal F_\alpha$ thus defined satisfies condition (1) (note that condition 
(2) is vacuous in this case, since it only concerns successor cardinals). To this effect, let 
$X\in\mathcal F_\alpha$, which means that $X\in\mathcal F_\beta$ for some $\beta<\alpha$.  Now by 
our inductive hypothesis (1), there exists some $P\in\mathcal F_\beta\subseteq\mathcal F_\alpha$ 
such that $P\subseteq X$ and $\chi_{\{n<\omega\big|q_n\in P\}}\in A_\beta\subseteq A_\alpha$, which 
shows that condition (1) still holds of $\mathcal F_\alpha$.

Now assume that we have already constructed $\mathcal F_\alpha$ satisfying 
conditions (1) and (2). In order to construct $\mathcal F_{\alpha+1}$, we first 
extend $\mathcal F_\alpha$ to some ultrafilter $\mathcal G\subseteq\mathscr P$, 
and let $\mathcal F_{\alpha+1}$ be 
the upward closure of the family
\begin{equation*}
\{X(g_\alpha)\big|X\in\mathcal G\text{, }
\chi_{\{n\lt\omega\big|q_n\in X\}}\in A_{\alpha}\text{ and }X\text{ is crowded unbounded}\}.
\end{equation*}

First of all, notice that  each of the $X(g_\alpha)$ generating $\mathcal F_{\alpha+1}$ is a 
perfect unbounded set (hence $\mathcal F_{\alpha+1}\subseteq\mathscr P$), since 
$\chi_{\{n\lt\omega\big|q_n\in X\}}\in A_\alpha$ implies that the $f_X$ given 
by Lemma~\ref{specialfunction} is also an element of $A_\alpha$, hence 
$g_\alpha\not\leq^*f_X$ and so $X(g_\alpha)$ is crowded unbounded. This reasoning also helps us 
establish that $\mathcal F_{\alpha+1}$ is indeed a filter, since 
whenever $Y_1,\ldots,Y_n\in\mathcal F_{\alpha+1}$ it is because for some 
crowded unbounded $X_1,\ldots,X_n\in\mathcal G$ with 
$\chi_{\{n\lt\omega\big|q_n\in X_i\}}\in A_\alpha$, we have that 
$X_i(g_\alpha)\subseteq Y_i$. But if we let $X$ be the maximal crowded unbounded 
subset of $X_1\cap\cdots\cap X_n$ (which exists since this intersection belongs 
to $\mathcal G\subseteq\mathscr P$), then it will be the case that $X\in\mathcal G$, 
$\chi_{\{n\lt\omega\big|q_n\in X\}}\in A_\alpha$, 
and therefore 
\begin{equation*}
Y_1\cap\cdots\cap Y_n\supseteq X_1(g_\alpha)\cap\cdots\cap X_n(g_\alpha)=(X_1\cap\cdots\cap X_n)(g_\alpha) 
\supseteq X(g_\alpha)\in\mathscr F_{\alpha+1}.
\end{equation*}

 Now, in order to show that $\mathcal F_{\alpha+1}$ extends $\mathcal F_{\alpha}$, we let 
 $X\in\mathcal F_\alpha$. By condition (1) on our induction hypothesis, there exists a perfect set 
 $P\subseteq X$ such that $\chi_{\{n<\omega\big|q_n\in P\}}\in A_\alpha$ and $P\in\mathcal F_\alpha$. 
 Therefore we have that
$X\supseteq P\supseteq P(g_\alpha)\in\mathcal F_{\alpha+1}$, and so 
$\mathcal F_\alpha\subseteq\mathcal F_{\alpha+1}$.

We now notice also that all of the $X(g_\alpha)$ that generate the filter $\mathcal F_{\alpha+1}$ 
are such that $\chi_{\{n\lt\omega\big|q_n\in X(g_\alpha)\}}\in A_{\alpha+1}$, 
since each $X(g_\alpha)$ is definable from 
$\chi_{\{n\lt\omega\big|q_n\in X\}}\in A_\alpha\subseteq A_{\alpha+1}$ and 
$g_\alpha\in A_{\alpha+1}$ (and 
our fixed enumeration of $\mathbb Q$, which lies in $A_0\subseteq A_{\alpha+1}$). Hence our 
$\mathcal F_{\alpha+1}$ satisfies condition (1) of the construction.

To show that condition (2) is also satisfied, let $X\subseteq\mathbb Q$ be such that 
$\chi_{\{n\lt\omega\big|q_n\in X\}}\in A_\alpha$, and let us show that 
$\mathcal F_{\alpha+1}$ contains one of $X,\mathbb Q\setminus X$. 
Since $\mathcal G$ is an ultrafilter, there is $Y\in\{X,\mathbb Q\setminus X\}$ such 
that $Y\in\mathcal G$. Since $\mathcal G\subseteq\mathscr P$, if we let $Z$ be the 
maximal crowded unbounded subset of $Y$, then $Z\in\mathcal G$. But notice that 
$\chi_{\{n\lt\omega\big|q_n\in Z\}}\in A_\alpha$, and this implies that 
$Y\supseteq Z\supseteq Z(g_\alpha)\in\mathcal F_{\alpha+1}$.

This finishes the recursive construction, and we are done.
\end{proof}

The authors would like to thank Andreas Blass for 
some very fruitful discussions, particularly concerning Cohen's 
paper~\cite{cohen}, as well as for greatly simplifying the proof of 
Lemma~\ref{boundedvsunbounded}.

\end{document}